\documentclass[11pt,a4paper,reqno]{amsart}
\usepackage{amsmath,amssymb,amsthm,amsfonts}
\usepackage[export]{adjustbox}
\usepackage{amsbsy}
\usepackage[utf8]{inputenc}
\usepackage[normalem]{ulem}
\usepackage{xspace}
\usepackage{cancel}
\usepackage[greek,english]{babel}
\usepackage{url}
\usepackage{enumerate}
\usepackage{wrapfig}
\usepackage{enumitem}
\usepackage{multicol}
\usepackage{moreenum}
\usepackage{xcolor}
\usepackage{longtable}
\usepackage{array}

\newcounter{dummy}
\makeatletter
\newcommand\myitem[1][]{\item[#1]\refstepcounter{dummy}\def\@currentlabel{#1}}
\makeatother
\usepackage{subcaption}
\usepackage{comment}
\usepackage{tikz}
\usetikzlibrary{backgrounds}
\usepackage[bottom=2.5cm,top=2.5cm, left=2.5cm, right=2.5cm]{geometry}

\usepackage{multirow}
\usepackage{array,multirow}

\definecolor{LinkColor}{rgb}{0,0,0} 
\usepackage[colorlinks=true,linkcolor=LinkColor,citecolor=LinkColor,urlcolor= LinkColor, naturalnames, hyperindex, pdfstartview=FitH, bookmarksnumbered, plainpages]{hyperref}
\usepackage{cleveref}
\usepackage[pagewise]{lineno}
\makeatletter
\newcommand{\slunlhd}{%
	\mathrel{\mathpalette\sl@unlhd\relax}%
}

\newcommand{\sl@unlhd}[2]{%
	\sbox\z@{$#1\lhd$}%
	\sbox\tw@{$#1\leqslant$}%
	\dimen@=\ht\tw@
	\advance\dimen@-\ht\z@
	\ifx#1\displaystyle
	\advance\dimen@ .2pt
	\else
	\ifx#1\textstyle
	\advance\dimen@ .2pt
	\fi
	\fi
	\ooalign{\raisebox{\dimen@}{$\m@th#1\lhd$}\cr$\m@th#1\leqslant$\cr}%
}
\makeatother

\newtheorem{innercustomthm}{Theorem}[section]

\crefname{innercustomthm}{Theorem}{Theorems}

\newtheorem{Theorem}{Theorem}[section]
\newtheorem{Corollary}[Theorem]{Corollary}
\newtheorem{Lemma}[Theorem]{Lemma}
\newtheorem{Proposition}[Theorem]{Proposition}

\theoremstyle{definition}

\newtheorem{Remark}[Theorem]{Remark}
\newtheorem{Example}[Theorem]{Example}

\newcommand{\Irr}{\operatorname{Irr}}
\newcommand{\para}{\par\vspace{.25cm}}

\newcommand{\U}{\mathcal{U}}

\newcommand{\F}{\mathbb{F}}
\newcommand{\Z}{\mathbb{Z}}

\author[J. Garg]{Jyoti Garg}
\address{Shaheed Udham Singh Government College, Sunam, Punjab, India.}
\email{\href{mailto:jyotigarg0811@gmail.com}{jyotigarg0811@gmail.com}}
\author[S. Maheshwary]{Sugandha Maheshwary}
\address{Department of Mathematics, Indian Institute of Technology Roorkee, Roorkee (Uttarakhand)-247667, India.}
\email{\href{mailto:msugandha@ma.iitr.ac.in}{msugandha@ma.iitr.ac.in}}

\author[H. Setia]{Himanshu Setia}
\address{Department of Mathematics, Indian Institute of Technology Roorkee, Roorkee (Uttarakhand)-247667, India.}
\email{\href{mailto:himansetia@gmail.com}{himansetia@gmail.com}}

\thanks{The second author gratefully acknowledges the support by Science  \& Engineering Research Board (SERB),  DST (Department of Science and Technology), India (SRG/2023/000180).}

\keywords{finite group rings, primitive central idempotents, units.}

\subjclass[2020]{16S34, 20C05, 16U60}
\date{}

\title{The structure of $\mathbb{Z}_nG$ and its unit group}
\begin{document}
	\maketitle
	\begin{abstract}

 This article determines the structure of the group ring $\Z_nG$, where $G$ is a finite group and $\Z_n$ is the ring of integers modulo $n$, such that $n$ is relatively prime to the order of $G$. The decomposition of $\Z_nG$ is given as a direct sum of matrix rings over Galois rings, thereby extending the structural theory of group rings beyond the classical field setting. We also provide a method to compute a generating set of the unit group $\mathcal{U}(\Z_nG)$, in terms of  elementary matrices, using Shoda pair theory. The results are illustrated with examples.
	\end{abstract}
	
\section{Introduction}
\noindent Let $G$ be a finite group and let $\Z_{n}$ be the ring of integers modulo $n$ with $\gcd(n,|G|)=1$, where $|G|$ denotes the order of $G$. One of the fundamental objectives of group ring theory is to understand the decomposition of group rings and the structure of their group of units. This  understanding not only helps in dealing with the isomorphism problem or investigating automorphism groups, but also has applications in coding theory. This problem has been studied for $\Z_nG$ in (\cite{KS24},~\cite{KKS25}) for the groups with exponent at most $4.$ Let $n=\prod_{i=1}^{k} p_i^{r_i},$ where $p_{i}$$'$s are distinct primes and $k,r_i \geq 1$. The Chinese remainder theorem implies $\Z_n\cong \oplus_{i=1}^{k} \Z_{p_i^{r_i}}.$ By using \cite[Lemma 1]{PS73}, we have that \begin{equation*}\label{equation1}
		\Z_nG\cong \bigoplus_{i=1}^{k}\Z_{p_i^{r_i}}G.
	\end{equation*} 
	From this, it suffices to determine the structure decomposition of $\mathbb{Z}_{p^r}G$, where $p$ is a prime not dividing $|G|$ and $r \geq 1$.
	 In this article, we determine the structure decomposition of $\Z_{p^r}G,$ where $|G|$ is relatively prime to $p,$
	using the structure decomposition of semisimple group algebra $\F_pG.$ During the last few years, a lot of progress has been done in understanding the Wedderburn decomposition of $\F_pG$ (\cite{AP22},\,\cite{BdR07},\,\cite{BGP13},\,\cite{BGP15},\,\cite{GM19},\,\cite{Mar15}). In Section 3, we provide the structure decomposition of $\Z_{p^r}G$ as direct sum of matrix rings over Galois rings. In section 4, we provide a method to compute a generating set for the unit group of $\Z_{p^r}G$ for a large class of groups called strongly monomial groups, using Shoda pair theory. We have illustrated the theory with explicit computation of generating set for group rings $\Z_{2^r}C_5$, where $r \geq 1$ and $C_5$ a cyclic group of order $5$ and $\mathbb{Z}_{8}G$, where $G$ is an extraspecial group of order $27$ and exponent $3$.
\section{Notation and Preliminaries}
\noindent Let $\F_pG$ be the finite semisimple group algebra of a finite group $G$ over the finite field $\mathbb{F}_{p}$, where $p$ is a prime such that $\gcd(p,|G|)=1$. Let the Wedderburn decomposition of $\F_pG$ be $\F_{p}G\cong \oplus_{i=1}^{k}M_{n_i}(D_i),$ where $D_i$$'$s are division algebras over $\F_p$. Since any finite dimensional division algebra over a finite field $\F_p$ is also a field, it follows that
$$\F_{p}G\cong \bigoplus_{i=1}^kM_{n_i}(\F_{p^{m_i}})$$ for some $n_{i},m_{i}\geq 1.$
\para\noindent Let us define a ring homomorphism $$-:\Z_{p^r}[x] \rightarrow \F_p[x]$$
$$a_0+a_1x+\cdots+a_nx^n \mapsto \bar{a_0}+\bar{a_1}x+\cdots+\bar{a_n}x^n,$$
where $a_i\in \Z_{p^r}$ and $\bar{a_i}={a_i} \mod{p}.$ The image of $\langle h(x) \rangle$ is the ideal $\langle\bar{h}(x)\rangle.$ The above ring homomorphism induces a ring homomorphism $$-:\Z_{p^r}[x]/\langle h(x) \rangle \rightarrow \F_p[x]/\langle \bar{h}(x)\rangle $$ $$a_0+a_1x+\cdots+a_nx^n+\langle h(x) \rangle \mapsto \bar{a_0}+\bar{a_1}x+\cdots+\bar{a_n}x^n+\langle \bar{h}(x)\rangle.$$
\para\noindent A polynomial is said to be {\it basic irreducible polynomial} in $\Z_{p^r}[x]$, if its reduction modulo $p$ is irreducible in $\F_p[x].$ For $m\in \mathbb{N}$, there exists a monic basic irreducible polynomial \cite[Theorem 13.9]{Wan03}, say  $h(x)$,  of degree $m$ over $\Z_{p^r}[x]$, and the quotient
$\Z_{p^r}[x]/\langle h(x)\rangle$ is known as the Galois ring \cite{Wan03} of characteristic $p^r$ and cardinality $p^{rm}$. Let $\xi=x+\left\langle h(x)\right\rangle$, so that $h(\xi)=0$ and $$\sum\limits_{i=0}^{m-1}a_ix^i+\left\langle h(x)\right\rangle =\sum\limits_{i=0}^{m-1}a_i\xi^i.$$ Hence, $\Z_{p^r}[x]/\langle h(x)\rangle=\Z_{p^r}[\xi],$ and all elements of $\Z_{p^r}[x]/\langle h(x)\rangle $ can be expressed uniquely in the form $\sum\limits_{i=0}^{m-1}a_i\xi^i.$
\para\noindent Let us recall the concept of Shoda pairs and strong Shoda pairs of a group and the primitive central idempotents associated to them.\\
A pair $(H,K)$ of subgroups of $G$, such that $H/K$ is cyclic, is called a \textit{Shoda pair}, if for every $g \in G$, we have that  $[H,g]\cap H\subseteq K$ implies $g \in H$. \\
For a Shoda pair $(H,K)$ of $G$, set $\varepsilon(H,K):=\begin{cases}\widehat{K}, & {H=K;} \\
	\prod(\widehat{K}-\widehat{L}), & \hbox{otherwise,}
\end{cases}$ where $\widehat{H}:=\frac{1}{|H|}\displaystyle\sum\limits_{h \in H}h$ and $L$ runs over minimal non-trivial normal subgroups of $H$ containing $K$.\\ The sum of all the distinct $G$-conjugates of $\varepsilon(H,K)$ is usually denoted by $e(G,H,K)$.
A Shoda pair $(H,K)$ of $G$ is called a \textit{strong Shoda pair} \cite[Definition 3.1]{OdRS04} of $G$, if the following conditions hold:\begin{itemize}\item [(i)]$K \leq H \unlhd N_{G}(K)$, where $N_{G}(K)$ is the normaliser of $K$ in $G$;\item[(ii)] $H/K$ is cyclic and a maximal abelian subgroup of $N_{G}(K)/K$;\item [(iii)] for every $g \in G\backslash N_{G}(K),~\varepsilon(H,K)g^{-1}\varepsilon(H,K)g=0$.\end{itemize} If $(H,K)$ is a strong Shoda pair of $G$, then $N_G(K)=\operatorname{Cen}_{G}(\varepsilon(H,K))$, where $\operatorname{Cen}_{G}(\varepsilon(H,K))=\{g \in G ~|~g^{-1}\varepsilon(H,K)g=\varepsilon(H,K)\}$, and if $\lambda$ is a linear character on $H$ with kernel $K$, then the corresponding idempotent $e_{\mathbb{Q}}(\lambda^G)= e(G,H,K)$.
 
\para\noindent Given a Shoda pair $(H,K)$ of $G$, suppose $\operatorname{Irr}(H/K)$ denotes the set of irreducible characters on $H/K$ over $\overline{\mathbb{F}}_{q}$, the algebraic closure of $\mathbb{F}_{q}$. If $k=[H:K]$ and $\xi_{k}$ is a primitive $k$th root of unity in $\overline{\mathbb{F}}_{q}$, then there is a natural action of $\operatorname{Gal}(\mathbb{F}_{q}(\xi_{k})/\mathbb{F}_{q})$ on $\operatorname{Irr}(H/K)$ by composition. The orbits of $\operatorname{Irr}(H/K)$ under this action are called $q$-cyclotomic classes. Denote by $\mathcal{C}_{q}(H/K)$ those $q$-cyclotomic classes which contain generators of $\operatorname{Irr}(H/K)$. If $\mathcal{C} \in \mathcal{C}_{q}(H/K)$ and $\chi \in \mathcal{C}$, set: $$\varepsilon_{\mathcal{C}}(H,K):= |H|^{-1}\sum\limits_{h \in H} \operatorname{tr}(\chi(hK))h^{-1}=[H:K]^{-1}\widehat{K}\sum\limits_{X \in H/K}\operatorname{tr}(\chi(X))h_{X}^{-1},$$ where  $\operatorname{tr}=\operatorname{tr}(\mathbb{F}_{q}(\xi_{k})/\mathbb{F}_{q}$) is the trace of the field extension $\mathbb{F}_{q}(\xi_{k})/\mathbb{F}_{q}$ and $h_{X}$ is a representative of $X \in H/K$.
\para\noindent Define $$e_{\mathcal{C}}(G,H,K):={\rm~the~sum~of~all~the~distinct~}G{\rm {\operatorname{-}} conjugates~of~}\varepsilon_{\mathcal{C}}(H,K).$$
It has been  proved in \cite{BdR07} that if $(H, K)$ is a strong Shoda pair of $G$, then $\operatorname{Cen}_G(\varepsilon_{\mathcal{C}}(H, K)) = E_G(H/K)$, where $E_G(H/K)$ denotes the stabiliser of any element of the cyclotomic class $\mathcal{C}_{q}(H/K)$ under the natural action of the $N_G(K)$.

 In the following theorem, using strong Shoda pairs of $G$, a description of the primitive central idempotents of $\F_pG$ and the associated simple components is given. The ongoing notation is used. 
\begin{Theorem}\cite[Theorem 7]{BdR07}
Let $G$ be a finite group and let $\F_p$ be a field of order $p$ such that $\F_pG$ is semisimple. If $(H, K)$ is a strong Shoda pair of $G$ and $\mathcal{C}\in  \mathcal{C}_q(H/K)$, then $e_{\mathcal{C}}(G, H, K)$ is a primitive central idempotent of $\F_pG$ and $\F_pGe_{\mathcal{C}}(G, H, K) \cong M_{[G:H]}(\mathbb{F}_{p^{{o/[E:H]}}} )$, where $E = E_G(H/K)$ and $o$ is the multiplicative order of $p$ modulo $[H : K]$.
\end{Theorem}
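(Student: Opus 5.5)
The plan follows the strategy of Olivieri, del R\'io and Sim\'on for rational group algebras, adapted to the finite field $\F_p$ as in \cite{BdR07}. We work in three stages: first the cyclic quotient $H/K$, then its normaliser, then all of $G$.

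\emph{Step 1 (cyclic quotient).} Since $\widehat{K}$ is a central idempotent of $\F_pH$ with $\F_pH\widehat{K}\cong \F_p[H/K]$, it suffices to treat the cyclic group $H/K$ of order $k$. By the Chinese remainder theorem,
\[
\F_p[H/K]\cong \F_p[x]/\langle x^k-1\rangle\cong\bigoplus_{d\mid k}\ \bigoplus_{\text{$p$-cyc.\ classes}} \F_p(\xi_d).
\]
The components for $d=k$ correspond to the classes $\mathcal{C}\in\mathcal{C}_p(H/K)$. The standard trace formula shows that $\varepsilon_{\mathcal{C}}(H,K)$ is exactly the primitive idempotent cutting out the component attached to $\mathcal{C}$. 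Hence $\F_pH\varepsilon_{\mathcal{C}}(H,K)\cong\F_p(\xi_k)=\F_{p^{o}}$. The orthogonality of characters needed here uses $p\nmid |H|$.

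\emph{Step 2 (normaliser).} Let $N=N_G(K)$. Since $H\unlhd N$, conjugation by $N$ permutes the classes $\mathcal{C}$, and the stabiliser is $E=E_G(H/K)$, which equals $\operatorname{Cen}_G(\varepsilon_{\mathcal{C}})$ by \cite{BdR07}. Put $\varepsilon=\varepsilon_{\mathcal{C}}(H,K)$. Then
\[
\F_pE\varepsilon\cong \F_{p^o}*_{\tau}E/H,
\]
a crossed product in which $E/H$ acts faithfully on $\F_{p^o}$. Faithfulness follows from the maximal abelian condition (ii): an element acting trivially would centralise $H/K$. Thus $E/H$ is identified with a subgroup of $\operatorname{Gal}(\F_{p^o}/\F_p)$, of order $[E:H]$.

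Any crossed product of a Galois extension with its Galois group is a central simple algebra over the fixed field. Here that fixed field is $\F_{p^{o/[E:H]}}$, and by Wedderburn's little theorem the Brauer group of a finite field is trivial. So $\F_pE\varepsilon\cong M_{[E:H]}(\F_{p^{o/[E:H]}})$, and the cocycle $\tau$ never has to be computed.

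The $N$-conjugates of $\varepsilon$ are pairwise orthogonal, being distinct primitive idempotents of the commutative algebra $\F_p[H/K]$. Their sum is therefore central in $\F_pN$. Inducing from $E$ to $N$ multiplies the matrix size by $[N:E]$.

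\emph{Step 3 (passing to $G$).} Condition (iii) gives $\varepsilon g^{-1}\varepsilon g=0$ for $g\notin N$, so all $G$-conjugates of $\varepsilon$ are pairwise orthogonal. Their sum $e_{\mathcal{C}}(G,H,K)$ is then a $G$-invariant idempotent, hence central. The elements $g_i\varepsilon$, with $g_i$ running over a transversal of $E$ in $G$, give matrix units. This yields
\[
\F_pGe_{\mathcal{C}}\cong M_{[G:E]}(\F_pE\varepsilon)\cong M_{[G:E][E:H]}(\F_{p^{o/[E:H]}})=M_{[G:H]}(\F_{p^{o/[E:H]}}).
\]
Since this algebra is simple, $e_{\mathcal{C}}$ is primitive central.

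\textbf{Main obstacle.} The delicate point is Step 2. One must verify that $E/H$ embeds in the Galois group, so that the action is faithful, and identify the centre as exactly the fixed field of order $p^{o/[E:H]}$. Triviality of the Brauer group of finite fields then removes any cocycle issue.
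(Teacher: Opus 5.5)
The paper itself gives no proof of this statement; it simply quotes \cite[Theorem 7]{BdR07}. Your argument is correct and is essentially the proof in that reference: identify $\F_pE\varepsilon_{\mathcal C}(H,K)$ with a crossed product of $\F_{p^o}$ by $E/H$ with faithful action, use that finite fields have trivial Brauer group, and inflate by the $[G:E]$ pairwise orthogonal $G$-conjugates. The one detail to add is that condition (iii) is stated for $\varepsilon(H,K)$, not $\varepsilon_{\mathcal C}(H,K)$. Before applying it, note that $\varepsilon_{\mathcal C}(H,K)=\varepsilon_{\mathcal C}(H,K)\,\varepsilon(H,K)$; this holds because $\chi$ is faithful on $H/K$, so $\varepsilon_{\mathcal C}(H,K)\widehat{L}=0$ for every $L$ occurring in the definition of $\varepsilon(H,K)$.
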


For the convenience of reader, we recall the notion of crossed products. \\

\noindent For a ring $R$ with unity, let $\operatorname{Aut}(R)$  and $\mathcal{U}(R)$ denote the group of automorphisms of $R$ and the group of units of $R$ respectively. Let $\sigma: G \rightarrow \operatorname{Aut}(R)$ and $\tau : G \times G \rightarrow \mathcal{U}(R)$ be two maps which satisfy the following relations,
$$\tau_{gh,x}\sigma_{x}(\tau_{g,h})= \tau_{g,hx} \tau_{h,x}$$ and $$\tau_{g,h}\sigma_{g}(\sigma_{h}(r))= \sigma_{gh}(r) \tau_{g,h},$$for all $g,h,x \in G$ and $r \in R$. Here $\sigma_{g}$ is the image of $g$ under the map $\sigma$ and $\tau_{g,h}$ is the image of $(g,h)$ under the map $\tau.$ Let $R*_{\tau}^{\sigma} G$ denote the set of finite formal sums $\left\{ \sum z_{g}a_{g} ~|~ a_{g} \in R, g \in G \right\}$, where $z_{g}$ is a symbol corresponding to $g \in G.$ Equality and addition in $R*_{\tau}^{\sigma} G$ are defined componentwise. For $g,h \in G$ and $r \in R,$ by setting
$$z_g z_h = z_{gh} \tau_{g,h},$$ $$rz_g = z_g \sigma_g(r)$$ and extending this rule distributively, $R*_{\tau}^{\sigma}G$ becomes an associative ring, called the {\it crossed product} of $G$ over $R$ with twisting $\tau$ and action $\sigma$. Note that $R*_{\tau}^{\sigma}G$ is a free $R$-module with $\{z_{g}~|~g \in G\}$ an $R$-basis, which we call a basis of units of $R*_{\tau}^{\sigma} G$ as an $R$-module. For more details on crossed products, see \cite[Section 2.6]{JdR16}.

By a cyclic crossed product, we shall mean the crossed product $R*_{\tau}^{\sigma}G$, where $R$ over $\mathcal{Z}(R*_{\tau}^{\sigma}G)$ is a cyclic Galois ring extension, i.e., $G=\operatorname{Gal}(R/\mathcal{Z}(R*_{\tau}^{\sigma}G))=\langle \phi \rangle$.

\section{A decomposition of $\Z_{p^r}G,\ r \geq 1$}
In this section, we obtain the structure of $\Z_{p^r}G,\ r \geq 1$, using that of $\mathbb{F}_pG.$ We begin by stating the main theorem.
\begin{Theorem}\label{structure} Let $G$ be a finite group and let $\F_p G$ denote the semisimple group algebra of $G$ over $\F_p$, so that $\gcd(p,|G|)=1$. If the Wedderburn decomposition of $\F_p G$ is
	$$\F_p G \cong \bigoplus_{i=1}^k M_{n_i}(\F_{p^{m_i}}),$$ then for $r\geq 1,$ 
	
		$$\Z_{p^r}G \cong \bigoplus_{i=1}^{k} M_{n_i}(\Z_{p^r}[\xi_{m_i}]),$$
		where $\xi_{m_i}=x+\left\langle h_i(x) \right\rangle,$ $h_i(x)$ being the monic basic irreducible polynomial of degree $m_i$ in $\Z_{p^r}[x]$.

\end{Theorem}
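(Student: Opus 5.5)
The strategy is to lift the Wedderburn decomposition of $\F_pG$ to $\Z_{p^r}G$ in two stages: first lift idempotents, then lift matrix units, using that $\Z_{p^r}G$ is complete (indeed finite) with respect to the nilpotent ideal $p\Z_{p^r}G$. The reduction map $\Z_{p^r}G\to \F_pG$ is surjective, and its kernel $J=p\Z_{p^r}G$ satisfies $J^r=0$. Since $\F_pG$ is semisimple, $J$ is exactly the Jacobson radical of $\Z_{p^r}G$.

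\textbf{Step 1: lifting the central idempotents.} Let $\bar e_1,\dots,\bar e_k$ be the primitive central idempotents of $\F_pG$. Lift the commutative subring $\mathcal{Z}(\F_pG)$: the center of $\Z_{p^r}G$ is spanned by the class sums, and these reduce onto the class sums of $\F_pG$. Hence $\mathcal{Z}(\Z_{p^r}G)\to\mathcal{Z}(\F_pG)$ is surjective with nilpotent kernel. Standard idempotent lifting in a commutative ring with nilpotent kernel (iterate $e\mapsto 3e^2-2e^3$) yields orthogonal central idempotents $e_i$ with $\sum e_i=1$ lifting the $\bar e_i$. This gives $\Z_{p^r}G=\bigoplus_i \Z_{p^r}Ge_i$, where each $A_i=\Z_{p^r}Ge_i$ is a free $\Z_{p^r}$-module with $A_i/pA_i\cong M_{n_i}(\F_{p^{m_i}})$.

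\textbf{Step 2: identifying each block.} Lift a full set of matrix units $\bar E_{jl}$ of $M_{n_i}(\F_{p^{m_i}})$ to $A_i$. First lift the diagonal orthogonal idempotents $\bar E_{jj}$; this works for orthogonal families modulo a nilpotent ideal. Then lift the off-diagonal units in the standard way (cf.\ Curtis--Reiner, Theorem~6.7 style): choose lifts $u$ of $\bar E_{1j}$ in $E_{11}A_iE_{jj}$ and $v$ of $\bar E_{j1}$ in $E_{jj}A_iE_{11}$, and correct $v$ by $(uv)^{-1}$ computed in the local ring $E_{11}A_iE_{11}$. One then gets $A_i\cong M_{n_i}(S_i)$ with $S_i=E_{11}A_iE_{11}$, a free $\Z_{p^r}$-algebra satisfying $S_i/pS_i\cong\F_{p^{m_i}}$. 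To identify $S_i$, note that $S_i$ is commutative: its reduction is commutative and, more concretely, $S_i$ contains a lift $\eta$ of a generator $\bar\alpha$ of $\F_{p^{m_i}}^\times$ with $\bar h_i(\bar\alpha)=0$. Since $\bar h_i$ is separable, Hensel's lemma in the $p$-adically complete ring $\Z_{p^r}[\eta]\subseteq S_i$ gives a root $\xi$ of $h_i$ in $S_i$ with $\xi\equiv\eta$. The resulting map $\Z_{p^r}[x]/\langle h_i\rangle\to S_i$ is a surjection mod $p$, hence surjective by Nakayama. Both sides have cardinality $p^{rm_i}$ because $S_i$ is free of rank $m_i$, so the map is an isomorphism. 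Rank freeness follows from $|A_i|=|A_i/pA_i|^r$, which holds because $A_i$ is a direct summand of the free module $\Z_{p^r}G$.

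\textbf{Main obstacle.} The delicate point is the Hensel step: it needs a commutative subring containing $\eta$, which one takes to be $\Z_{p^r}[\eta]$. The iteration $\xi\mapsto\xi-h_i(\xi)h_i'(\xi)^{-1}$ then stays inside $\Z_{p^r}[\eta]$ and converges after finitely many steps since $p^r=0$. It also uses that $h_i'(\eta)$ is a unit, because its reduction is nonzero in the field. The matrix-unit lifting is routine but must be carried out carefully.
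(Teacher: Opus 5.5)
Your argument is correct, and it takes a genuinely different route from the paper's. For this theorem the paper does not lift idempotents or matrix units. It fixes an isomorphism $\bar\phi\colon \mathbb{F}_pG\to\bigoplus_i M_{n_i}(\mathbb{F}_p[\bar\xi_{m_i}])$ and writes each $\alpha\in\mathbb{Z}_{p^r}G$ through its $p$-adic digits as $\alpha=\sum_{l=0}^{r-1}\alpha_l p^l$ with $\alpha_l\in\mathbb{F}_pG$. It then applies $\bar\phi$ digit by digit, lifts every matrix entry coefficientwise from the basis $\{\bar\xi_{m_i}^j\}$ to $\{\xi_{m_i}^j\}$, and reassembles. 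Injectivity is checked by repeated reduction modulo $p$, and surjectivity by the count $|G|=\sum_i n_i^2m_i$.

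That route aims at an explicit, lifting-free map, but its step ``it follows that $\phi$ is a ring homomorphism'' does not hold, because digitwise transport ignores carries. Take $p=3$, $r=2$, $G=C_2=\langle g\rangle$ and $\bar\phi(g)=(1,2)$. Then in $\mathbb{Z}_9\oplus\mathbb{Z}_9$ one gets $\phi(g)=(1,2)$ and $\phi(2g)=(2,1)\neq(2,4)=2\phi(g)$, and also $\phi(g)^2=(1,4)\neq(1,1)=\phi(g^2)$. The other choice $\bar\phi(g)=(2,1)$ fails symmetrically. So the paper's map is only a bijection of underlying sets. Your scheme does produce a ring isomorphism: lift the primitive central idempotents inside $\mathcal{Z}(\mathbb{Z}_{p^r}G)$, lift a full set of matrix units in each block, then identify $S_i=E_{11}A_iE_{11}$ with $\mathbb{Z}_{p^r}[x]/\langle h_i\rangle$ by Hensel, Nakayama and a cardinality count. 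It also matches the idempotent lifting the paper itself uses in Section~4 to build the $\omega_{\mathcal{C}}(H,K)$.

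Three small repairs are needed.

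First, do not require $\bar\alpha$ to generate $\mathbb{F}_{p^{m_i}}^\times$, since $\bar h_i$ need not be primitive. Any root of $\bar h_i$ in $S_i/pS_i\cong\mathbb{F}_{p^{m_i}}$ will do; one exists because $\mathbb{F}_{p^{m_i}}$ splits every irreducible polynomial of degree $m_i$ over $\mathbb{F}_p$. Such a root generates $\mathbb{F}_{p^{m_i}}$ as an $\mathbb{F}_p$-algebra, which is all the Nakayama step uses.

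Second, commutativity of $S_i/pS_i$ alone does not give commutativity of $S_i$. Commutativity of $S_i$ follows from the surjection $\mathbb{Z}_{p^r}[x]/\langle h_i\rangle\to S_i$, so drop that remark; your Hensel step correctly works only inside $\mathbb{Z}_{p^r}[\eta]$.

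Third, for the Newton iterates to stay in $\mathbb{Z}_{p^r}[\eta]$ you need $h_i'(\xi_k)^{-1}\in\mathbb{Z}_{p^r}[\eta]$. This holds because $h_i'(\xi_k)$ is a unit of the finite ring $S_i$, so its inverse is one of its own powers.
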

\begin{proof}
	Since $\overline{\xi}_{m_i}=x+\langle{\overline{{h}_i(x)}}\rangle$ is a root of the monic irreducible polynomial ${\overline{{h}_i(x)}}$ over $\F_{p}$ and $\F_{p}[x]/\langle {\overline{{h}_i(x)}}\rangle= \F_{p}[\overline{\xi}_{m_i}]\cong \F_{p^{m_i}},$
	we have that
	$\F_{p}G\cong \bigoplus\limits_{i=1}^{k}M_{n_i}(\F_{p}[\overline{\xi}_{m_i}]).$ 
	Let $$\overline{\phi}: \F_{p}G\rightarrow \bigoplus_{i=1}^{k}M_{n_i}(\F_{p}[\overline{\xi}_{m_i}])$$ be a ring isomorphism.
	
	Any arbitrary element of $\Z_{p^r}G$ is of the form $\sum\limits_{g\in G}a_gg$, where $a_g\in \Z_{p^r}$. Write $a_g=\sum\limits_{l=0}^{r-1}c_{lg}p^l$ with $c_{lg}\in \F_{p}$, so that  
	 $\sum\limits_{g\in G}a_gg=\sum\limits_{l=0}^{r-1}\left(\sum\limits_{g\in G}c_{lg}g\right)p^l.$ Therefore, every element of $\Z_{p^r}G$ is of the form $\sum\limits_{l=0}^{r-1}\alpha_lp^l$, where $\alpha_l\in \F_pG$ for all $0\leq l\leq r-1.$ Define the map $\phi : \mathbb{Z}_{p^r}G \rightarrow \bigoplus\limits_{i=1}^{k} M_{n_i}(\Z_{p^r}[\xi_{m_i}])$ given by 
	$$\sum\limits_{l=0}^{r-1}\alpha_{l}p^l \overset{\phi}\mapsto \left(\left[\sum\limits_{l=0}^{r-1}{a_{uv_{(1)}}^{(l)}}p^l\right]_{n_1},\left[\sum\limits_{l=0}^{r-1}a_{uv_{(2)}}^{(l)}p^l\right]_{n_2},\cdots,\left[\sum\limits_{l=0}^{r-1}a_{uv_{(k)}}^{(l)}p^l\right]_{n_k}\right),$$
	where 
	$$\alpha_l\overset{\overline{\phi}}\mapsto \left(\left[\bar{a}_{uv_{(1)}}^{(l)}\right]_{n_1},\left[\bar{a}_{uv_{(2)}}^{(l)}\right]_{n_2},\cdots,\left[\bar{a}_{uv_{(k)}}^{(l)}\right]_{n_k}\right).$$ Note that  $\bar{a}_{uv_{(i)}}^{(l)}=\sum\limits_{j=0}^{m_i-1}{\mathfrak{a}}^{(l)}_{uv_{(i)}}(j)\bar{\xi}_{m_i}^{j}$ for some ${\mathfrak{a}}^{(l)}_{uv_{(i)}}(j)\in \F_{p}$, and  $a_{uv_{(i)}}^{(l)}=\sum\limits_{j=0}^{m_i-1}{\mathfrak{a}}^{(l)}_{uv_{(i)}}(j)\xi_{m_i}^{j}$, where $1\leq i \leq k$.
	From the definitions of $\overline{\phi}$ and $\phi$, and using that $\overline{\phi}$ is a ring isomorphism, it follows that $\phi$ is a ring homomorphism.\\
	
	\noindent $(i)$ \textbf{\underline{$\phi$ is well defined}}:\\
		Let $\alpha$, $\beta$ be two elements in $ \mathbb{Z}_{p^r}G$, so that  $\alpha=\sum\limits_{l=0}^{r-1}\alpha_lp^l$ and $\beta=\sum\limits_{l=0}^{r-1}\beta_lp^l $, for some $\alpha_l, \beta_l\in \F_pG$.  We check that if $\alpha=\beta$, then $\phi(\alpha)=\phi(\beta)$. Consider \begin{equation}\label{eq1}
		\sum\limits_{l=0}^{r-1}\alpha_lp^l=\sum\limits_{l=0}^{r-1}\beta_lp^l.
	\end{equation} Multiply \Cref{eq1} by $p^{r-1}$, we obtain that $\alpha_0p^{r-1}=\beta_0p^{r-1}.$ Let us assume $\alpha_0=\sum\limits_{g\in G}a_gg$ and $\beta_0=\sum\limits_{g\in G}b_gg,$ where $a_g,b_g\in \F_{p}.$ This implies $$\sum\limits_{g\in G}a_gp^{r-1}g=\sum\limits_{g\in G}b_gp^{r-1}g.$$

As the support of two equal group ring elements is equal, we get that $(a_g-b_g)p^{r-1}=0$ for all $g\in G$. Clearly $a_g=0$ if and only if $b_g=0.$ Assume that both $a_g$ and $b_g$ are non-zero. If $a_g-b_g\neq 0,$ then $a_g-b_g$ is a unit as $a_g-b_g\in \mathbb{F}_p$, implying that $p^{r-1}=0,$ which is not true. So, $a_g=b_g$ for all $g\in G$. Thus, we have $\alpha_0=\beta_0$ and \begin{equation}\label{eq2}
	\sum\limits_{l=1}^{r-1}\alpha_lp^l=\sum\limits_{l=1}^{r-1}\beta_lp^l.
\end{equation} Multiplying equation (\ref{eq2}) by $p^{r-2},$ we obtain that $\alpha_1p^{r-1}=\beta_1p^{r-1}.$ Arguing as above, we obtain that $\alpha_1=\beta_1.$ 
Continuing in this manner, we obtain successively $\alpha_l=\beta_l,$ for all $0\leq l\leq r-1.$ This implies that for every $l$, $\overline{\phi}(\alpha_l)=\overline{\phi}(\beta_l)$ and hence $$ \left(\left[\bar{a}_{uv_{(1)}}^{(l)}\right]_{n_1},\left[\bar{a}_{uv_{(2)}}^{(l)}\right]_{n_2},\cdots,\left[\bar{a}_{uv_{(k)}}^{(l)}\right]_{n_k}\right)=\left(\left[\bar{b}_{uv_{(1)}}^{(l)}\right]_{n_1},\left[\bar{b}_{uv_{(2)}}^{(l)}\right]_{n_2},\cdots,\left[\bar{b}_{uv_{(k)}}^{(l)}\right]_{n_k}\right)$$ with $\bar{a}_{uv_{(i)}}^{(l)}=\sum\limits_{j=0}^{m_i-1}\mathfrak{a}^{(l)}_{uv_{(i)}}(j)\bar{\xi}_{m_i}^{j}$ and $\bar{b}_{uv_{(i)}}^{(l)}=\sum\limits_{j=0}^{m_i-1}\mathfrak{b}^{(l)}_{uv_{(i)}}(j)\bar{\xi}_{m_i}^{j}$, where $\mathfrak{a}^{(l)}_{uv_{(i)}}(j),\mathfrak{b}^{(l)}_{uv_{(i)}}(j)\in \F_{p}$ and $1\leq i \leq k$.  Consequently, we get $\bar{a}_{uv_{(i)}}^{(l)}=\bar{b}_{uv_{(i)}}^{(l)}$, which means $$\sum\limits_{j=0}^{m_i-1}\mathfrak{a}^{(l)}_{uv_{(i)}}(j)\bar{\xi}_{m_i}^{j}=\sum\limits_{j=0}^{m_i-1}\mathfrak{b}^{(l)}_{uv_{(i)}}(j)\bar{\xi}_{m_i}^{j}.$$ Furthermore, the linear independence of $\{1,\bar{\xi}_{m_i},\cdots,\bar{\xi}_{m_i}^{m_i-1}\}$ over $\F_{p}$ implies $\mathfrak{a}^{(l)}_{uv_{(i)}}(j)=\mathfrak{b}^{(l)}_{uv_{(i)}}(j)$ for all $1\leq u,v\leq n_i, 0\leq j\leq m_{i}-1$ and $1\leq i\leq k.$ Therefore, we get $a_{uv_{(i)}}^{(l)}=b_{uv_{(i)}}^{(l)},$ for all $1\leq u,v\leq n_i$ and $1\leq i\leq k.$ Hence, $\phi(\alpha)=\phi(\beta)$.\\ 
	
	\noindent $(ii)$ \textbf{\underline{$\phi$ is injective}}:\\ \noindent 
	We now check that $\phi$ is injective. For this, let $\alpha \in \operatorname{ker}\left(\phi\right)$, so that $\alpha=\sum\limits_{l=0}^{r-1}\alpha_{l}p^l\in \Z_{p^r}G$ and 
	$$\phi(\sum\limits_{l=0}^{r-1}\alpha_{l}p^l)=\textbf{0}.$$
	This gives $$\left(\left[\sum\limits_{l=0}^{r-1}a_{uv_{(1)}}^{(l)}p^l\right]_{n_1},\left[\sum\limits_{l=0}^{r-1}a_{uv_{(2)}}^{(l)}p^l\right]_{n_2},\cdots,\left[\sum\limits_{l=0}^{r-1}a_{uv_{(k)}}^{(l)}p^l\right]_{n_k}\right)=([\textbf{0}]_{n_1},\textbf{[0]}_{n_2},\cdots,\textbf{[0]}_{n_k})$$ and hence $\sum\limits\limits_{l=0}^{r-1}a_{uv_{(i)}}^{(l)}p^l=0$ for all $1\leq i\leq k $ and for all $1\leq u,v\leq n_i$. By the definition of $a_{uv_{(i)}}^{(l)}$, we obtain that
	\begin{equation}\label{eq4}
		\sum\limits_{l=0}^{r-1}\sum\limits_{j=0}^{m_i-1}\mathfrak{a}_{uv_{(i)}}^{(l)}(j)\xi_{m_i}^{j}p^l=0.
	\end{equation}
	Let us denote $\sum\limits_{l=0}^{r-1}\mathfrak{a}_{uv_{(i)}}^{(l)}(j)p^l=\mathcal{A}_{uv}^{i}(j).$ Thus, \Cref{eq4} gives that 
	$\sum\limits_{j=0}^{m_i-1}\mathcal{A}_{uv}^{i}(j)\xi_{m_i}^j=0,$
	where $\mathcal{A}_{uv}^{i}(j)\in \mathbb{Z}_{p^r}.$ Going modulo $p$, we get that 
	$$\sum\limits_{j=0}^{m_i-1}\bar{\mathcal{A}}_{uv}^{i}(j)\bar{\xi}_{m_i}^j=0,$$
	where $\bar{\mathcal{A}}_{uv}^{i}(j) (=\mathfrak{a}^{(l)}_{uv_{(0)}}(j)) \in \mathbb{F}_p$. Furthermore, the linear independence of $\{1,\bar{\xi}_{m_i},\cdots,\bar{\xi}_{m_i}^{m_i-1}\}$ over $\F_{p}$ implies that $\bar{\mathcal{A}}_{uv}^{i}(j)=0$ for all $0\leq j\leq m_i-1.$ This concludes that \begin{equation}\label{eq5}
		\mathcal{A}_{uv}^{i}(j)=pA_{uv}^{i}(j)^{(1)},
	\end{equation}  where $A_{uv}^{i}(j)^{(1)} \in \mathbb{Z}_{p^r}.$ It follows that   
	$$p\left(\sum\limits_{j=0}^{m_i-1}A_{uv}^{i}(j)^{(1)}\xi_{m_i}^j\right)=0$$ for all $1\leq i\leq k$ and for all $1\leq u,v\leq n_i$. Then $\sum\limits_{j=0}^{m_i-1}A_{uv}^{i}(j)^{(1)}\xi_{m_i}^j$ is either $0$ or a zero divisor, i.e., $\sum\limits_{j=0}^{m_i-1}A_{uv}^{i}(j)^{(1)}\xi_{m_i}^j\in p\Z_{p^r}[\xi_{m_i}]$  \cite[Theorem 14.8]{Wan03}.
	\noindent Going modulo $p$, we get $\sum\limits_{j=0}^{m_i-1}\bar{A}_{uv}^{i}(j)^{(1)}\bar{\xi}_{m_i}^j=0$. Reasoning as above, we get that $\bar{A}_{uv}^{i}(j)^{(1)}=0$ for all $0\leq j\leq m_i-1.$ Thus, we may write ${A}_{uv}^{i}(j)^{(1)}=pA_{uv}^{i}(j)^{(2)}$ for all $0\leq j\leq m_i-1$ and $1\leq i\leq k.$ Proceeding in the same way, we obtain successively ${A}_{uv}^{i}(j)^{(3)},{A}_{uv}^{i}(j)^{(4)},\cdots,{A}_{uv}^{i}(j)^{(r)}$, all belongs to $\Z_{p^r}[\xi_{m_i}]$, such that ${A}_{uv}^{i}(j)^{(s)}=p{A}_{uv}^{i}(j)^{(s+1)}$, for $1\leq s\leq r-1.$
	Then, substituting the value in \Cref{eq5}, we get that $$\mathcal{A}_{uv}^{i}(j)=p^rA_{uv}^{i}(j)^{(r)}=0.$$
	This gives $\sum\limits_{l=0}^{r-1}\mathfrak{a}_{uv_{(i)}}^{(l)}(j)p^l=0$, which as seen earlier implies that 
	$\mathfrak{a}_{uv_{(i)}}^{(l)}(j)=0$ and consequently $	{\bar{a}}_{uv_{(i)}}^{(l)}=0$ for all $0\leq l\leq r-1$, $1\leq i\leq k$ and $0\leq j\leq m_i-1$.
	
	\noindent Note that $$\bar{\phi}(\alpha_l)=\left(\left[\bar{a}_{uv_{(1)}}^{(l)}\right]_{n_1},\left[\bar{a}_{uv_{(2)}}^{(l)}\right]_{n_2},\cdots,\left[\bar{a}_{uv_{(k)}}^{(l)}\right]_{n_k}\right).$$ This implies that $\bar{\phi}(\alpha_l)=(0,0,\cdots,0)$ and using the fact that $\bar{\phi}$ is a ring isomorphism, we get that $\alpha_l=0$ for all $0\leq l\leq r-1$. This immediately concludes that $$\sum\limits_{l=0}^{r-1}\alpha_lp^l=0.$$\\
	\noindent $(iii)$ \textbf{\underline{$\phi$ is surjective}}:\\ \noindent For $\phi$ to be surjective, it is enough to prove that the dimensions of $\Z_{p^r}G$ and $\oplus_{i=1}^{k} M_{n_i}(\Z_{p^r}[\xi_{m_i}])$ over $\Z_{p^r}$ are equal.\\ \noindent As $\bar{\phi}$ is a ring isomorphism, computing the dimension of $\F_p G$ and $\oplus_{i=1}^{k}M_{n_i}(\F_p [\overline{\xi}_{m_i}])$ over $\F_p$, we get \begin{equation}\label{eq3}
		|G|=\sum\limits_{i=1}^{k}n_{i}^2m_{i}.
	\end{equation}
	Clearly, the dimension of $\Z_{p^r}G$ over $\Z_{p^r}$ is $|G|$. Also, the dimension of $\oplus_{i=1}^{k} M_{n_i}(\Z_{p^r}[\xi_{m_i}])$ over $\Z_{p^r}$ is $\sum\limits_{i=1}^{k}n_{i}^2m_{i}.$ Hence, the dimensions of $\Z_{p^r}G$ and $\oplus_{i=1}^{k} M_{n_i}(\Z_{p^r}[\xi_{m_i}])$ over $\Z_{p^r}$ are equal, by  \Cref{eq3}. 
\end{proof}

\begin{Remark}
	The ring \(M_{n_i}(\mathbb{Z}_{p^r}[\xi_{m_i}])\) is not simple, since 
	\(M_{n_i}(I)\) is an ideal of \(M_{n_i}(\mathbb{Z}_{p^r}[\xi_{m_i}])\) 
	for every ideal \(I \subseteq \mathbb{Z}_{p^r}[\xi_{m_i}]\).
\end{Remark}

\begin{Corollary}\label{pw}
	Let $G$ be a finite abelian group of order $n$ and let $p$ be a prime such that  $p$ does  not divide $n$. Then 
	$$\Z_{p^r}G \cong \bigoplus_{d \mid n} a_d \Z_{p^r}[\xi_{o_d(p)}],$$
	where $o_d(p)$ is order of $p$ modulo $d$, $a_d=\frac{n_d}{o_d(p)}$ and $n_d$ is the number of elements of order $d$ in $G.$
\end{Corollary}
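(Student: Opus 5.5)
The plan is to reduce the corollary to Theorem~\ref{structure} by first working out the Wedderburn decomposition of the commutative semisimple algebra $\F_pG$, and then lifting it.

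\textbf{Step 1: the decomposition over $\F_p$.} Since $G$ is abelian, $\F_pG$ is commutative. As $\gcd(p,n)=1$, Maschke's theorem makes it semisimple, so all $n_i=1$ and
$$\F_pG\cong\bigoplus_i \F_{p^{m_i}}.$$
I would determine the $m_i$ and their multiplicities by the standard Perlis--Walker argument. Write $G\cong\prod_j C_{e_j}$, so that
$$\F_pG\cong \F_p[x_1,\dots,x_t]/\langle x_j^{e_j}-1\rangle.$$
Over the splitting field, the simple components correspond to characters $\chi\in\widehat{G}\cong G$. The components of $\F_pG$ then correspond to orbits of $\operatorname{Gal}(\overline{\F}_p/\F_p)$, acting on $\widehat G$ via $\chi\mapsto\chi^p$.

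For an orbit of a character $\chi$ of order $d$, the component is $\F_p(\chi(G))=\F_p(\zeta_d)$. This field has degree $o_d(p)$, and the orbit has exactly $o_d(p)$ elements. The characters of order $d$ are in bijection with the elements of order $d$ in $G$ (a non-canonical isomorphism $\widehat G\cong G$ preserves orders). Hence they number $n_d$ and split into $n_d/o_d(p)$ orbits, which gives
$$\F_pG\cong\bigoplus_{d\mid n}a_d\,\F_{p^{o_d(p)}}.$$
Alternatively, I could cite the Perlis--Walker theorem directly. The same count can also be read off from \cite[Theorem 7]{BdR07} with the strong Shoda pairs $(G,K)$, $G/K$ cyclic, but the character-orbit count is cleaner.

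\textbf{Step 2: lifting.} Apply Theorem~\ref{structure} with $n_i=1$ and $m_i=o_d(p)$. Each copy of $\F_{p^{o_d(p)}}$ becomes $\Z_{p^r}[\xi_{o_d(p)}]$, with $a_d$ copies for each $d\mid n$, which is exactly the claimed decomposition.

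\textbf{Main obstacle.} The only genuinely nontrivial point is the counting in Step 1. One must check that each Galois orbit of characters of order $d$ has size exactly $o_d(p)$. This holds because $\chi^{p^k}=\chi$ if and only if $d\mid p^k-1$. One must also check that the field generated by the values of such a $\chi$ is precisely $\F_{p^{o_d(p)}}$. Both facts are elementary.
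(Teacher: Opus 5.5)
Your proposal is correct and follows the paper's route. The paper likewise gets $\F_pG\cong\bigoplus_{d\mid n}a_d\,\F_p(\zeta_d)$ by citing the Perlis--Walker theorem together with $[\F_p(\zeta_d):\F_p]=o_d(p)$, then lifts via Theorem~\ref{structure}; your Galois-orbit count of characters simply spells out the Perlis--Walker step that the paper takes from the literature.
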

\begin{proof}
	Let $\zeta_d$ denote a primitive $d^{th}$ root of unity, so that the degree of monic irreducible polynomial of $\zeta_d$ over $\F_{p}$ is $o_d(p)$ \cite{Gue68} and $\left[\F_{p}(\zeta_d):\F_{p}\right]=o_d(p).$ Hence, the corollary follows by \cite[Theorem 3.5.4]{MS02} and \Cref{structure}.
\end{proof}

\begin{Example}{\textbf{The structure of $\Z_{36}(C_5 \times C_{5}).$}}
	
	\noindent By Corollary \ref{pw},
	$$\Z_{2^2}(C_{5} \times C_5)\cong \Z_{2^2} \oplus 6 \Z_{2^2}[\xi_{4}^{(2^2)}]$$  and 
	$$\Z_{3^2}(C_{5} \times C_5) \cong \Z_{3^2} \oplus 6 \Z_{3^2}[\xi_{4}^{(3^2)}].$$ Therefore, $$\Z_{36}(C_{5} \times C_5) \cong \Z_{36} \oplus 6 \Z_{4}[\xi_{4}^{(2^2)}] \oplus 6 \Z_{9}[\xi_{4}^{(3^2)}],$$
	where $\xi_{4}^{(2^2)}$ and $\xi_{4}^{(3^2)}$ are roots of monic basic irreducible polynomials of degree $4,$ over $\Z_{2^2}$ and $\Z_{3^2}$, respectively.
\end{Example}

\begin{Example}{\textbf{The structure of $\Z_{p^r}S_n$,\ $p > n$.}}
	From \cite[Proposition 3.5]{AP22}, we have $$\F_{p}S_n \cong \bigoplus_{\chi \in \Irr(G)}  M_{\chi (1)}(\F_{p}),$$ and \Cref{structure} implies
	$$\Z_{p^r}S_n \cong \bigoplus_{\chi \in \Irr(G)}  M_{\chi (1)}(\Z_{p^r}).$$
	Here, $\Irr(G)$ denotes the set of irreducible characters of $G$ and the value $\chi(1)$ can be calculated as the number of standard
	Young tableaux of shape $\lambda$, where $\lambda$ is a partition of $n.$
\end{Example}

\section{The unit group of $\Z_{p^r}G$}
\noindent In this section, we study $\mathcal{U}(\Z_{p^r}G)$, the unit group of $\Z_{p^r}G$ using its structure determined in the last section. \\
For a group $G$ such that $\operatorname{gcd}(p,|G|)=1$, if	$$\Z_{p^r}G \cong \bigoplus_{i=1}^{k} M_{n_i}(\Z_{p^r}[\xi_{m_i}]),$$ then $$\mathcal{U}(\Z_{p^r}G) \cong \prod_{i=1}^{k} GL_{n_i}(\Z_{p^r}[\xi_{m_i}]).$$ We now aim to find the generators of the unit group $\mathcal{U}(\Z_{p^r}G)$. It is easy to see that \linebreak $GL_{n_i}(\Z_{p^r}[\xi_{m_i}])\cong \mathcal{U}(\Z_{p^r}[\xi_{m_i}])\times SL_{n_i}(\Z_{p^r}[\xi_{m_i}])$. For the commutative components, i.e., if $n_i=1$, the unit group  $\mathcal{U}(M_{n_i}(\Z_{p^r}[\xi_{m_i}]))$ is the unit group of Galois ring $ \mathcal{U}(\Z_{p^r}[\xi_{m_i}])$, which is computed in \cite[Theorem 14.11]{Wan03}. For $n_i> 1$, it is known that the special linear group $SL_{n_i}(\Z_{p^r}[\xi_{m_i}])=E_{n_i}(\Z_{p^r}[\xi_{m_i}])$, the set generated by elementary matrices, i.e., the matrices of the form $I+\boldsymbol{\alpha} E_{ij}$, where $\boldsymbol{\alpha} \in \Z_{p^r}[\xi_{m_i}]$, $I$ denotes the identity matrix, and $E_{ij}$ denotes the matrix with the $(i,j)$-th entry $1$ and all other entries zero \cite[Theorem 4.3.9]{HO89}. Hence, $GL_{n_i}(\Z_{p^r}[\xi_{m_i}])$ is generated by set of elementary matrices and the diagonal matrices of suitable orders.

In subsection 4.1, we compute the generators of the unit group of $\Z_{2^r}C_5$, $r\geq 1$.
 In subsection 4.2, we provide a method to compute the set of generators of $SL_{n_i}(\Z_{p^r}[\xi_{m_i}])$ for a class of groups called strongly monomial groups, using Shoda pair theory. We also provide the explicit calculations for the group ring $\Z_8G$, where $G$ is an extraspecial group of order $27$ and exponent $3$.

\para\noindent We begin by proving a small yet useful result which helps to compute the orders of units in $\Z_{p^r}G$, $r\geq 1$. 
\begin{Lemma}\label{order}
	Let $u$ be a unit of order $l$ in $\F_pG$. Then $u$ is a unit of order $lp^{r-k}$ in $\Z_{p^r}G$, where $k$ is the least positive integer such that $u^l-1 \in \langle p^k \rangle$ and  $u^l-1 \not \in \langle p^{k+1} \rangle$.

\end{Lemma}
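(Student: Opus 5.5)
The proof compares the order of $u$ in $\Z_{p^r}G$ with its order in $\F_pG$. The gap between the two orders is the order of $u^l$, a unit of the form $1+p^kw$, and that order is a power of $p$ which can be computed from the binomial theorem. Here $u$ is regarded as an element of $\Z_{p^r}G$ via the lift already used in the proof of \Cref{structure}: each coefficient in $\F_p$ is read as a digit in $\{0,\dots,p-1\}$.

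First, $u$ is a unit of $\Z_{p^r}G$. Reduction modulo $p$ is a surjective ring map $\Z_{p^r}G\to\F_pG$ whose kernel $p\Z_{p^r}G$ is nilpotent, since $(p\Z_{p^r}G)^r=0$. Hence an element whose image is a unit is itself a unit: if $uu'=1-x$ with $x$ nilpotent, then $u'(1+x+\cdots+x^{r-1})$ is a right inverse, and similarly on the left.

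Let $N$ be the order of $u$ in $\Z_{p^r}G$. Reducing $u^N=1$ modulo $p$ shows $l\mid N$. Put $v=u^l$. By hypothesis $v=1+p^kw$ with $w\in\Z_{p^r}G$ and $w\notin p\Z_{p^r}G$. If $N=lm$, then $v^m=1$, so $\operatorname{ord}(v)\mid m$; conversely $u^{l\cdot\operatorname{ord}(v)}=1$. Hence
$$N=l\cdot\operatorname{ord}(v),$$
and it remains to show $\operatorname{ord}(v)=p^{r-k}$. If $k\ge r$ then $v=1$ and the statement is read with $p^{r-k}$ replaced by $1$, so assume $1\le k<r$.

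The main step is the following claim, proved by induction on $j$ for $0\le j\le r-k$:
$$v^{p^j}=1+p^{k+j}w_j,\qquad w_j\equiv w \pmod p.$$
All computations take place in the commutative subring $\Z_{p^r}[w]$, so the binomial theorem applies. Expanding $(1+p^{k+j}w_j)^p$, the linear term is $p^{k+j+1}w_j$. For $2\le i\le p-1$, the $i$-th term carries $p\cdot p^{i(k+j)}$, which is divisible by $p^{k+j+2}$. The last term carries $p^{p(k+j)}$, and $p(k+j)\ge k+j+2$ except when $p=2$ and $k+j=1$. So outside that case $w_{j+1}\equiv w_j\equiv w\pmod p$, and the claim follows.

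This case $p=2$, $k=1$ is the main obstacle, and the hypothesis must exclude it. There $(1+2w)^2=1+4(w+w^2)$, and $w+w^2$ may well lie in $2\Z_{2^r}G$; for instance $3^2=1$ in $\Z_8$. I would therefore state and use the claim under the assumption $p$ odd or $k\ge2$, noting this restriction explicitly. In the exceptional case one can instead rewrite $v^2=1+2^{k'}w'$ with $k'\ge2$ and restart the induction from there.

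Granting the claim, taking $j=r-k$ gives $v^{p^{r-k}}=1$. Taking $j=r-k-1$ gives $v^{p^{r-k-1}}=1+p^{r-1}w_{r-k-1}$, which is not $1$ because $w_{r-k-1}\equiv w\not\equiv0\pmod p$. Since $\operatorname{ord}(v)$ divides $p^{r-k}$, it is a power of $p$, and it cannot divide $p^{r-k-1}$. Therefore $\operatorname{ord}(v)=p^{r-k}$, and so $N=lp^{r-k}$.
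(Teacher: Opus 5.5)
Your argument follows the same route as the paper's proof. Both lift $u$ to a unit using nilpotence of $p\Z_{p^r}G$, write $u^l=1+p^kz$, and reduce to showing that $1+p^kz$ has order $p^{r-k}$. The paper simply asserts ``$u^{lp^{r-k}}=1$ and $u^{lp^{r-k-1}}\neq 1$'' with no justification. Your binomial induction supplies that justification, and it does so correctly whenever $p$ is odd or $k\ge 2$.

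Your caveat about $p=2$, $k=1$ is also correct. It points to a defect in the lemma and in the paper's proof, not in your argument: the assertion $u^{lp^{r-k-1}}\neq 1$ is false there in general.

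\begin{itemize}
\item The statement and the paper's proof never say which preimage of $u$ in $\Z_{p^r}G$ is meant. So your own example is already a counterexample. Take $3\in\Z_8$ as a lift of $1\in\F_2$: then $l=1$ and $k=1$, the formula predicts order $4$, but $3^2=1$.
\item Likewise, $-g\in\Z_8C_3$ lifts $g$ and has $(-g)^3-1=-2$, so $k=1$. The predicted order is $12$, but the actual order is $6$.
\item Your argument never uses the digit-lift convention, so you can drop it. With that convention in force, $3\in\Z_8$ is not a digit lift, and it only shows that your inductive step breaks, not that the lemma fails.
\end{itemize}

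Your identity $(1+2z)^2=1+4(z+z^2)$ also gives the exact criterion in the exceptional case. Assume $r\ge 3$ and $\bar z\neq 0$. Then the order of $u$ is $l\,2^{r-1}$ if and only if $\bar z^2\neq\bar z$ in $\F_2G$.
\begin{itemize}
\item If $\bar z^2\neq\bar z$, you can restart your induction from $1+4(z+z^2)$, where now $k=2$.
\item If $\bar z^2=\bar z$, then $u^{2l}\in 1+8\Z_{2^r}G$, so the order of $u$ divides $l\,2^{r-2}$.
\end{itemize}

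This matters for the paper itself. The paper later applies the lemma to $1+g+g^2\in\Z_{2^r}C_5$, which has $p=2$ and $k=1$. That application therefore needs this extra check.
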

\begin{proof}
	Clearly, $\Z_{p^r}G/\langle p \rangle \cong \F_pG.$ If $u \in \mathcal{U}(\F_pG)$ has inverse $v,$ then $uv-1 \in \langle p \rangle$. Since $\langle p \rangle$ is a nil ideal, we have that 
	$uv$ is unipotent unit in $\Z_{p^r}G$ and hence $u$ is invertible in $\Z_{p^r}G.$ 
	
	If order of $u \in \mathcal{U}(\F_pG)$  is $l,$ then $u^l-1=0$ in $\F_pG$ which  implies $u^l-1 \in \langle p \rangle \subseteq \Z_{p^r}G.$ Hence, there exists 
	a least positive integer $k$ such that $u^l-1 \in \langle p^k \rangle $ and $u^l-1\not \in \langle p^{k+1} \rangle.$ Therefore, 
	$$u^l = 1+p^kz,\ \text{where}\ z\neq pz_1\ \text{for any}\ z_1 \in \Z_{p^r}G.$$
	As $u^{lp^{r-k}}=1$ and $u^{lp^{r-k-1}} \neq 1$ in $\Z_{p^r}G,$ we have that order of $u^l$ in $\Z_{p^r}G$ is $p^{r-k}.$ 
	 Consequently, order of $u$ in $\Z_{p^r}G$ is $lp^{r-k}.$
\end{proof}
\subsection{$\mathcal{U}(\Z_{p^r}G),~G$ cyclic}
We begin by studying  $\mathcal{U}(\Z_{p^r}C_n)$, where $C_n$ denotes the cyclic group of order $n$. The general structure of this unit group follows from \Cref{pw}. 

It may be noted that for $n=2$, $\U(\Z_{p^r}C_n)$ has been studied in \cite{KS24}. Also, for $n=3$, a full set of generators of $\U(\Z_{p^r}C_n)$ is obtained, when $p=2$ \cite[Theorem 3.4]{KKS25}, and a partial set of generators is computed, when $p$ is an odd prime \cite[Theorem 4.1, Theorem 4.5]{KKS25}. \\

We now provide the explicit set of generators for $\mathcal{U}(\Z_{2^r}C_5)$, $r\geq 1$.

\begin{Proposition} 
		Let $C_5=\langle g \rangle.$ Then the following hold: 
	\begin{itemize}
		\item[(i)] $\U(\F_2C_5)=\langle 1+g+g^2 \rangle (\simeq C_{15}),$ and
		\item[(ii)] For $r>1$, $$\mathcal{U}(\Z_{2^r}C_5)=G_1 \times G_2 \times G_3 \times G_4 \times G_5 \times G_6 \times G_7,$$  
 where $G_1=\langle 5 \rangle ~(\simeq C_{2^{r-2}}),\ G_2= \langle -1 \rangle ~(\simeq C_2),\ G_3 = \langle 1+g+g^2 \rangle ~(\simeq C_{15.2^{r-1}}),\linebreak \
G_4= \langle 1+2g \rangle~ (\simeq C_{2^{r-1}}),\ G_5= \langle 1+2g^2 \rangle ~(\simeq C_{2^{r-1}}),\ G_6=\langle 1+4g \rangle ~ ( \simeq C_{2^{r-2}}),\ \text{and}\linebreak \ 
 G_7= \langle (1-a)\hat{g}-1 \rangle (\simeq C_2)$~ $\mathrm{with}$~ $a=3.5^{-1}\mod{2^r}.$	
		\end{itemize}
\end{Proposition}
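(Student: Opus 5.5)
By Corollary~\ref{pw} applied with $n=5$ and $p=2$, we have $o_1(2)=1$, $o_5(2)=4$ and $a_5=4/4=1$. Hence
$$\Z_{2^r}C_5\cong \Z_{2^r}\oplus \Z_{2^r}[\xi_4],$$
and therefore $\U(\Z_{2^r}C_5)\cong \U(\Z_{2^r})\times \U(\Z_{2^r}[\xi_4])$. We make this explicit using the idempotents $\hat g=\frac15\sum_{i=0}^{4} g^i$ and $1-\hat g$. The first projection is the augmentation map $\sum c_ig^i\mapsto \sum c_i$. The second is $x\mapsto x(1-\hat g)$, with image $\Z_{2^r}[g]/(1+g+g^2+g^3+g^4)$, the Galois ring of degree $4$.

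For (i), take $r=1$. Then $\F_2C_5\cong \F_2\oplus\F_{16}$, so $\U(\F_2C_5)\cong C_{15}$, and it suffices to check that $u=1+g+g^2$ has order $15$. Its augmentation is $1$. In $\F_{16}=\F_2[\zeta_5]$ its image is $1+\zeta+\zeta^2=\zeta^2(\zeta+\zeta^{-1}+\zeta^{-2})$. We verify directly that $u^3\neq 1$ and $u^5\neq 1$ by computing $u^3$ and $u^5$ in $\F_2C_5$, where $g^5=1$.

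For (ii), we first determine the target group abstractly:
\begin{itemize}
\item $\U(\Z_{2^r})=\langle -1\rangle\times\langle 5\rangle\cong C_2\times C_{2^{r-2}}$;
\item by \cite[Theorem 14.11]{Wan03} with $m=4$, $\U(\Z_{2^r}[\xi_4])\cong C_{15}\times C_2\times C_{2^{r-2}}\times C_{2^{r-1}}^3$.
\end{itemize}
Consequently $|\U(\Z_{2^r}C_5)|=15\cdot 2^{5r-5}$, which is exactly the product of the claimed orders of $G_1,\dots,G_7$.

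Next we compute the order of each generator.
\begin{itemize}
\item For $1+g+g^2$ we use Lemma~\ref{order} with $l=15$. This requires finding the exact $2$-adic valuation $k$ of $u^{15}-1$ in $\Z_{2^r}C_5$; the claimed order $15\cdot 2^{r-1}$ corresponds to $k=1$.
\item For $1+2g$, $1+2g^2$ and $1+4g$ we use the binomial expansion $(1+2^jx)^{2^s}\equiv 1+2^{j+s}x \pmod{2^{j+s+1}}$, valid for $j\ge 2$, together with a separate check at $j=1$.
\item For $G_7$, the element $(1-a)\hat g-1$ projects to $-a$ on the trivial component and to $-1$ on the other component. Here $a\equiv 3\cdot 5^{-1}$ is chosen so that the element lies in $\Z_{2^r}C_5$ and squares to $1$.
\end{itemize}
Since the orders multiply to $|\U|$, it remains to show that the generated subgroups intersect trivially. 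Equivalently, the abelian group they generate has full order.

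This independence step is the main obstacle. Write $\U=C_{15}\times P$, where $P$ is the Sylow $2$-subgroup and the $C_{15}$ factor is the $15$-part of $\langle 1+g+g^2\rangle$. Then $P$ is abelian of rank $7$, matching the number of $2$-parts among our generators (the $2$-part of $G_3$ together with $G_1,G_2,G_4,\dots,G_7$). It therefore suffices to show that the images of these $2$-parts span $P/P^2\cong \F_2^{7}$.

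To do this I would use the filtration $1+2^jA$ with $A=\Z_{2^r}C_5$, whose successive quotients are isomorphic to $A/2A\cong\F_2^5$ (additively), together with the two components. Concretely, I would reduce modulo $8$, that is, work in $\U(\Z_8C_5)$, and compute the images of the generators in the quotients $(1+2A)/(1+4A)$ and $(1+4A)/(1+8A)$. Squares of elements in $1+2A$ land in $1+4A$ and are controlled by the map $x\mapsto x+x^2$ modulo $2$, which is what makes $1+4g$ needed in addition to $1+2g$. The check then reduces to showing that an explicit $7\times 7$ matrix over $\F_2$ is nonsingular. The case $r=2$ is handled separately, since $G_1$ and $G_6$ are then trivial.
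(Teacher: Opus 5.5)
Your route to directness differs from the paper's and is the better one, but two of your order computations are wrong as written.

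\textbf{Comparison with the paper.} The paper identifies $G_1\times G_2$ with $\U(\Z_{2^r})$. It then proves directness only for $G_3,\dots,G_7$, by exhibiting one involution in each and counting the $2^5-1$ involutions of $\langle g_3,\dots,g_7\rangle$. It never checks that the scalar subgroups $\langle 5\rangle$ and $\langle -1\rangle$ meet $G_3\cdots G_7$ trivially; these are $(5,5)$ and $(-1,-1)$ in the two components, not subgroups of the first factor. Your Frattini-quotient argument on $P=1+2A$ handles all seven factors at once, and it does go through, so carry it out rather than leaving an unspecified $7\times 7$ matrix:
\begin{itemize}
\item For $r\ge 3$, squaring maps $1+4A$ onto $1+8A$, so $1+8A\subseteq P^2$ and working mod $8$ is legitimate.
\item In $(1+2A)/(1+4A)\cong\F_2C_5$, the images of $-1$, $(1+g+g^2)^{15}$, $1+2g$, $1+2g^2$ and the $G_7$ generator are $1$, $1+g+g^4$, $g$, $g^2$, $g+g^2+g^3+g^4$. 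These form a basis.
\item The elements $5=1+4\cdot 1$ and $1+4g$ lie in $1+4A$. Identify $(1+4A)/P^2\cong\F_2C_5/\{z+z^2\}$ with $\F_2^2$ via $w\mapsto(w_0,\,w_1+w_2+w_3+w_4)$; their images are $(1,0)$ and $(0,1)$.
\item For $r=2$, $P\cong\F_2C_5$ is elementary abelian, and the five nontrivial generators already give a basis.
\end{itemize}

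\textbf{First error: $G_7$.} You use $\hat g=\frac15\sum g^i$, which is the paper's normalisation from Section 2. Then $(1-a)\hat g-1$ has components $(-a,-1)$, and its square is $1-(1-a^2)\hat g$. Since $a^2=9\cdot 25^{-1}$, this equals $1$ only when $2^r\mid 16$. For $r\ge 5$ the element has order $2^{r-3}$, and the product of the orders exceeds $|\U(\Z_{2^r}C_5)|$, so the product cannot be direct. Your claim that $a$ is chosen so that the element squares to $1$ is therefore false. Also, membership in $\Z_{2^r}C_5$ imposes no condition on $a$ at all. The value $a=3\cdot 5^{-1}$ only makes sense for the unnormalised $\hat g=1+g+g^2+g^3+g^4$. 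With that reading the augmentation is $5(1-a)-1=1$, the element is $(1,-1)$, and it has order $2$ for every $r$. You need to adopt that reading explicitly.

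\textbf{Second error: $G_3$.} Lemma~\ref{order} fails for $p=2$: for $u=-1$ one has $k=1$, but the order is $2$, not $2^{r-1}$. So $k=1$ alone does not give order $15\cdot 2^{r-1}$. You need the same $j=1$ check you reserve for $1+2g$:
\begin{itemize}
\item From $(1+g+g^2)^3=g^3+2\alpha_1$ one gets $(1+g+g^2)^{15}\equiv 1+2z \pmod 4$ with $\bar z=1+g+g^4$.
\item Then $\bar z+\bar z^2=g+g^2+g^3+g^4\neq 0$, so the $2$-part has order $2^{r-1}$.
\end{itemize}

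\textbf{A simpler alternative.} You can avoid exact order computations altogether. The easy bounds $x_i^{n_i}=1$, with $n_i$ the claimed orders, follow because $1+2A$ has exponent $2^{r-1}$, $1+4A$ has exponent $2^{r-2}$, $(1+g+g^2)^{15}\in 1+2A$, and $x_7^2=1$. These give a homomorphism $\prod_i C_{n_i}\to\U(\Z_{2^r}C_5)$. It is onto once generation is established via $P/P^2$, with the $15$-part coming from $1+g+g^2$. It is then bijective because $\prod_i n_i=|\U(\Z_{2^r}C_5)|$.
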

\begin{proof} 
		$(i)$  By the structure of $\F_2C_5$, the unit group $\U(\F_2C_5)\cong C_{15}$ and $1+g+g^2$ is a unit of order $15$ in $\F_2C_5$. Hence, $\U(\F_2C_5)=\langle 1+g+g^2\rangle $.  \para

\noindent $(ii)$ For $r \geq 2,$ using Corollary \ref{pw}, we have $\Z_{2^r}C_5 \cong \Z_{2^r} \oplus \Z_{2^r}[\xi_4].$ Hence, $$\mathcal{U}(\Z_{2^r}C_5) \cong  \mathcal{U}(\Z_{2^r}) \times \mathcal{U}(\Z_{2^r}[\xi_4]).$$ 
	Since $\mathcal{U}(\Z_{2^r})=\langle 5 \rangle \times \langle -1 \rangle \cong C_{2^{r-2}}\times C_2,$ clearly, $\mathcal{U}(\Z_{2^r})=  G_1 \times G_2.$ 
	Also, it follows from \linebreak \cite[Theorem 14.11]{Wan03}, that  $$\mathcal{U}(\Z_{2^r}[\xi_4]) \simeq C_{15.2^{r-1}} \times C_{2^{r-1}} \times C_{2^{r-1}} \times C_{2^{r-2}} \times C_2.$$
Observe that 
	\begin{align*}
		(1+g+g^2)^{15}&=((1+g+g^2)^{3})^5 \\
		&=(g^3+2\alpha_1)^{5}   \\
		&=(1+2\alpha_2),\end{align*}where $\alpha_1=2+2g+3g^2+3g^3+3g^4$ and $\alpha_2=5\alpha_1g^2 + 20\alpha_1^2g^4 + 40\alpha_1^3g + 40\alpha_1^4g^3 + 16\alpha_1^5$. It is clear that $(1+g+g^2)^{15}-1 \in \langle 2 \rangle$ and $(1+g+g^2)^{15}-1 \notin \langle 2^2 \rangle$. Hence, using Lemma \ref{order}, the order of $1+g+g^2$ is $15.2^{r-1}$ in $\Z_{2^r}C_5$. Further,  $(1+2g)^{2^{r-1}}=1 \operatorname{mod} 2^r$ and \linebreak $(1+2g)^{2^{r-2}}=1+2^{r-1}(g+g^2)\neq 1\operatorname{mod} 2^r$ and hence, $1+2g$ is an element of order $2^{r-1}$ in $\Z_{2^r}C_5$. Similarly, the orders of $1+2g^2$ and $1+4g$ are $2^{r-1}$ and  $2^{r-2},$ respectively. Clearly, $(1-a)\hat{g}-1$ is an element of order  $2.$ Thus, in order to prove that 	$\mathcal{U}(\Z_{2^r}C_5)= G_1 \times G_2 \times G_3 \times G_4 \times G_5 \times G_6 \times G_7,$ it suffices to show that if $H_i=\prod\limits_{j=3}^{i}G_j,\ 3 \leq i \leq 7 $, then $H_{k} \cap G_{k+1}=\{1\}$, for every $3 \leq k \leq 6.$ 
	 If $H_k \cap G_{k+1}\neq\{1\}$ for some $k,$ then clearly $H_k$ and $G_{k+1}$ have a common element of order $2.$ Therefore,
	it is sufficient to check that $H_k$ and $G_{k+1}$ have distinct elements of order $2$ for every $3 \leq k \leq 6.$ We observe this by checking that $H_7$ has precisely $2^5-1$ elements of order $2.$ If $g_i$ denotes the element of order $2$ in $G_i,\ 3 \leq i \leq 7,$ then  
	$g_3=1+2^{r-1}(g+g^2+g^3+g^4),\linebreak \ g_4= 1+2^{r-1}(g+g^2),\ g_5=1+2^{r-1}(g^2+g^4),\ g_6= 1+2^{r-1}g,\ g_7=(1-a)\hat{g}-1.$ 
	By direct computations, it can be checked that 
	$\langle g_3 \rangle \times \langle g_4 \rangle \times\langle g_5 \rangle \times\langle g_6 \rangle \times\langle g_7 \rangle$ contains $2^5-1$ distinct  elements of order $2.$ This proves (ii).
\end{proof}
\noindent The above approach suggests that structure of the unit group of $\Z_{p^r}G$ is determinable using that of $\mathcal{U}(\F_{p}G)$. For explicit generators of $\mathcal{U}(\Z_{p^r}G)$, the generators of $\mathcal{U}(\F_{p}G)$ may be lifted suitably.

\subsection{$\mathcal{U}(\Z_{p^r}G),~G$ strongly monomial}
In this subsection, we provide a method to compute the generators of $GL_{n_i}(\Z_{p^r}[\xi_{m_i}])$, $n_i>1$ in $\Z_{p^r}G$. The group $SL_{n_i}(\Z_{p^r}[\xi_{m_i}])$, $n_i>1$ is generated by the set $\{I+\boldsymbol{\alpha} E_{ij}~|~\boldsymbol{\alpha} \in \Z_{p^r}[\zeta_{m_i}]\}$, where $E_{ij}$ denotes the matrix with the $(i,j)$-th entry 1 and all other entries zero \cite[Theorem 4.3.9]{HO89}. As $\mathcal{U}(\Z_{p^r}G) \cong \prod_{i=1}^{k} GL_{n_i}(\Z_{p^r}[\xi_{m_i}]),$ we need to understand this isomorphism.

 One of the methods to study the units of $\mathbb{F}_{p}G$ is by using its primitive central idempotents. As observed in the Section 3, the structure of the group ring $\mathbb{Z}_{p^r}G$ is quite analogous to that of semisimple group algebra $\mathbb{F}_{p}G$. We next explore this possibility of studying the unit group of $\mathbb{Z}_{p^r}G$ by understanding primitive central idempotents of this ring. For this, we first compute the primitive central idempotents of $\Z_{p^r}G$ using those of $\F_pG$ and then relate them to units of $\Z_{p^r}G$.  
	
	In particular, for a strongly monomial group $G$, one of the most effective theories, in the case of semisimple group algebras, is Shoda pair theory and the complete set of primitive central idempotents of $\mathbb{F}_{p}G$ have been studied \cite{BdR07}. We extend the same to obtain substantial results on primitive central idempotents of $\mathbb{Z}_{p^r}G$ in this section and use these results to obtain information on its unit group. 
\para\noindent Let $(H,K)$ be a strong Shoda pair of $G$ and let $\mathcal{C}\in \mathcal{C}_{p}(H/K)$. It follows from  \cite{BdR07} that $\varepsilon_{\mathcal{C}}(H,K)$ is a primitive central idempotent of $\F_pH$. Define $$\omega_{\mathcal{C}}(H,K)=\left(\sum\limits_{k=1}^{n}(-1)^{k-1}\binom{n}{k}\varepsilon_{\mathcal{C}}(H,K)^k\right)^n,$$ where $n$ is the nilpotency index of $\varepsilon_{\mathcal{C}}(H,K)$ in $\mathbb{Z}_{p^r}H$.

We now head towards obtaining a complete set of primitive orthogonal central idempotents of $\mathbb{Z}_{p^r}H$ by lifting   the primitive orthogonal central idempotents of $F_pH$.

\begin{Lemma}\label{lemma_lifting_base_idempotent}
	Let $G$ be a group and let $p$ be a prime not dividing $|G|$. For a Shoda pair $(H,K)$ of $G$, if  $\{\varepsilon_{\mathcal{C}}(H,K)~|~\mathcal{C} \in \mathcal{C}_{p}(H/K)\}$ is a set of primitive central  orthogonal idempotents of $F_pH$, then $\{\omega_{\mathcal{C}}(H,K)~|~\mathcal{C} \in \mathcal{C}_{p}(H/K)\}$ is a set of primitive central orthogonal idempotents of $\mathbb{Z}_{p^r}H$, where $\overline{\omega_{\mathcal{C}}(H,K)}=\varepsilon_{\mathcal{C}}(H,K)$. 
\end{Lemma}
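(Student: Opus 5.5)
We regard $\varepsilon=\varepsilon_{\mathcal{C}}(H,K)$ as an element of $\mathbb{Z}_{p^r}H$ by lifting its coefficients. Since $p\nmid |H|$, the element $|H|$ is a unit in $\mathbb{Z}_{p^r}$, so the defining formula for $\varepsilon$ makes sense in $\mathbb{Z}_{p^r}H$ directly. The main input is that $\varepsilon^2-\varepsilon\in p\,\mathbb{Z}_{p^r}H$, which is a nilpotent ideal since $p^r=0$. Thus $\varepsilon$ is an idempotent modulo a nil ideal, and the proof is the standard lifting-of-idempotents argument made explicit by the formula for $\omega_{\mathcal{C}}(H,K)$. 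I read ``nilpotency index $n$'' as an integer with $(\varepsilon^2-\varepsilon)^n=0$; taking $n=r$ always works.

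\textbf{Step 1: $\omega=\omega_{\mathcal{C}}(H,K)$ is an idempotent reducing to $\varepsilon$.} Write $x=\varepsilon$. Then $\sum_{k=1}^{n}(-1)^{k-1}\binom{n}{k}x^k=1-(1-x)^n$, so $\omega=(1-(1-x)^n)^n$. The two polynomials $x^n$ and $(1-x)^n$ are coprime and sum to something whose expansion gives
\[
1=\bigl(x+(1-x)\bigr)^{2n-1}=x^nA(x)+(1-x)^nB(x).
\]
Set $e=x^nA(x)$. Since $x(1-x)$ is nilpotent, $e(1-e)=x^n(1-x)^nA(x)B(x)$ is a multiple of $(x(1-x))^n=0$, so $e$ is idempotent. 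Moreover $e\equiv x\pmod{x^2-x}$ modulo the nil ideal. Rather than proving directly that $\omega$ equals $e$, I would argue as follows. The element $\omega$ is a polynomial in $x$ with integer coefficients, and modulo $p$ it reduces to $(1-(1-\bar x)^n)^n=\bar x$, because $\bar x$ is idempotent. Then I would verify $\omega^2=\omega$ using $x^n\equiv x\cdot(\text{stuff})$ relations in $\mathbb{Z}[x]/((x^2-x)^n)$. This is a computation in the commutative ring $\mathbb{Z}[x]/((x-x^2)^n)$, where $x\mapsto 1-(1-x)^n$ is the classical idempotent-lifting map. I expect this step to be the main obstacle: the exact power $n$ in the paper's formula may need adjusting, for example to $2n$ or $n+1$. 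If so, I would fall back on the statement that iterating $f(x)=1-(1-x)^n$ (or $3x^2-2x^3$) converges after finitely many steps, so $\omega$ is idempotent for $n$ large.

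\textbf{Step 2: centrality and orthogonality.} Each $\varepsilon_{\mathcal{C}}$ is central in $\mathbb{Z}_{p^r}H$, since it is a class-function combination (its coefficients are constant on conjugacy classes by the same computation as over $\F_p$). Hence $\omega_{\mathcal{C}}$, being a polynomial in $\varepsilon_{\mathcal{C}}$, is central. For $\mathcal{C}\neq\mathcal{C}'$ we have $\varepsilon_{\mathcal{C}}\varepsilon_{\mathcal{C}'}\in p\,\mathbb{Z}_{p^r}H$. Every monomial of $\omega_{\mathcal{C}}$ contains a factor $\varepsilon_{\mathcal{C}}$, and likewise for $\omega_{\mathcal{C}'}$, so $\omega_{\mathcal{C}}\omega_{\mathcal{C}'}\in p\,\mathbb{Z}_{p^r}H$. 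This product is a central idempotent lying in a nil ideal, and the only such idempotent is $0$, since $f=f^{r}\in p^r\mathbb{Z}_{p^r}H=0$.

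\textbf{Step 3: primitivity.} Suppose $\omega=f_1+f_2$ with $f_1,f_2$ orthogonal central idempotents of $\mathbb{Z}_{p^r}H$. Reducing modulo $p$ gives $\varepsilon=\bar f_1+\bar f_2$ in $\F_pH$, and primitivity of $\varepsilon$ forces, say, $\bar f_2=0$. Then $f_2$ is an idempotent in $p\,\mathbb{Z}_{p^r}H$, hence $f_2=0$. Therefore $\omega$ is primitive, which completes the proof.
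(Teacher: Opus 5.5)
Your Steps 2 and 3 are sound and follow the paper's route. $\omega_{\mathcal C}(H,K)$ is central because it is a polynomial in the central element $\varepsilon_{\mathcal C}(H,K)$. Primitivity follows by reducing a decomposition modulo $p$ and using that $p\Z_{p^r}H$ contains no nonzero idempotent. Your orthogonality argument is more direct than the paper's appeal to \cite[Proposition 27.4]{AF92}, and it applies to these particular lifts: $\omega_{\mathcal C}\omega_{\mathcal C'}$ is a product of commuting idempotents, hence an idempotent, and it reduces to $0$.

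One small correction. The coefficients of $\varepsilon_{\mathcal C}(H,K)$ lie in $\F_p$, so its formula does not literally make sense in $\Z_{p^r}H$. Lift the coefficients through a fixed section $\F_p\to\Z_{p^r}$. Then they stay constant on conjugacy classes, and the lift is central.

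The genuine gap is in Step 1, which is the heart of the lemma. You never prove that the specific element $\omega_{\mathcal C}(H,K)$ is idempotent, and you suspect the exponent $n$ may be wrong. Your fallback (changing the exponent, or iterating $x\mapsto 1-(1-x)^n$) would only show that \emph{some} idempotent lift exists, not that this $\omega_{\mathcal C}(H,K)$ is one. Likewise, your auxiliary idempotent $e=x^nA(x)$ is never connected to $\omega$.

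The missing observation is that $\omega$ has exactly the shape of your $e$. With $x=\varepsilon_{\mathcal C}(H,K)$, put $y=1-(1-x)^n=x\,q(x)$, where $q(x)=\sum_{k=1}^{n}(-1)^{k-1}\binom{n}{k}x^{k-1}$. Then $\omega=y^n=x^nq(x)^n$. Also
\[
1-\omega=1-y^n=(1-y)(1+y+\cdots+y^{n-1})=(1-x)^n(1+y+\cdots+y^{n-1}).
\]
Working in the commutative ring $\Z_{p^r}[x]$, this gives
\[
\omega(1-\omega)=(x-x^2)^n\,q(x)^n\,(1+y+\cdots+y^{n-1})=0
\]
as soon as $(x-x^2)^n=0$, for instance for $n=r$ or for the nilpotency index of $x-x^2$.

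So the paper's exponent needs no adjustment. This is the standard lifting construction, which the paper obtains by citing \cite[Proposition 27.1]{AF92}. Combined with your correct computation $\overline{\omega}=\bar x=\varepsilon_{\mathcal C}(H,K)$, this closes Step 1, and the rest of your argument then goes through.
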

\begin{proof}
	Let $G$ be a group with Shoda pair $(H,K)$. It is easy to observe that the nil ideal of $\mathbb{Z}_{p^r}H$ is $p\mathbb{Z}_{p^r}H$. Therefore, by \cite[Proposition 27.1]{AF92}, the idempotents of $\mathbb{F}_pH$, namely $\varepsilon_{\mathcal{C}}(H,K)$, where $\mathcal{C} \in \mathcal{C}_{p}(H/K)$ can be lifted to $\omega_{\mathcal{C}}(H,K)$ in $\mathbb{Z}_{p^r}H$. As $\varepsilon_{\mathcal{C}}(H,K)$ is central, we readily obtain that  $\omega_{\mathcal{C}}(H,K)$ is a central idempotent. Rather, using \cite[Proposition 27.4]{AF92}, it can be verified that $\{\omega_{\mathcal{C}}(H,K)~|~\mathcal{C} \in \mathcal{C}_{p}(H/K)\}$ is a set of orthogonal idempotents of $\mathbb{Z}_{p^r}H$. Further, each $\omega_{\mathcal{C}}(H,K)$ is primitive. For, if $\omega_{\mathcal{C}}(H,K)=e_1+e_2$, where $e_1,e_2$ are primitive central idempotents of $\mathbb{Z}_{p^r}H$ such that $e_1e_2=0$, then reducing it modulo $p$ gives either $e_1$ or $e_2$ is nilpotent element of $\mathbb{Z}_{p^r}H$, which is not true. Hence, $\{\omega_{\mathcal{C}}(H,K)~|~\mathcal{C} \in \mathcal{C}_{p}(H/K)\}$ is a set of primitive central orthogonal idempotents of $\mathbb{Z}_{p^r}H$.
\end{proof}
 For a strong Shoda pair $(H,K)$ of $G$, and for $\mathcal{C}\in \mathcal{C}_{q}(H/K)$, set $$w_{\mathcal{C}}(G,H,K)  ={\rm~the~sum~of~all~the~distinct~}G{\rm {\operatorname{-}} conjugates~of~}\omega_{\mathcal{C}}(H,K).$$ 
\begin{Proposition}\label{prop_lifting_induced_idempotents} Let $G$ be a group and let $(H,K)$ be a strong Shoda pair of $G$. Then  the following statements hold:
	\begin{enumerate}[label=(\roman*)]
		\item  $\operatorname{Cen}_{G}(\omega_{\mathcal{C}}(H,K))=\operatorname{Cen}_{G}(\varepsilon_{\mathcal{C}}(H,K))$ and $\overline{w_{\mathcal{C}}(G,H,K)}=e_{\mathcal{C}}(G,H,K).$
		\item $w_{\mathcal{C}}(G,H,K)$, $ \mathcal{C} \in \mathcal{C}_{p}(H/K)$, are distinct primitive central orthogonal idempotents of $\mathbb{Z}_{p^r}G$.
	\end{enumerate}
\end{Proposition}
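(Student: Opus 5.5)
The plan rests on one structural fact: lifting idempotents modulo the nil ideal $p\mathbb{Z}_{p^r}G$ is unique for \emph{central} idempotents. Precisely, if $e,f$ are central idempotents of $\mathbb{Z}_{p^r}G$ with $\bar e=\bar f$, then $e=f$. To see this, put $u=e-f$, which lies in $p\mathbb{Z}_{p^r}G$ and is nilpotent. Since $e$ and $f$ commute, $u^3=u$: indeed $(e-f)^2=e+f-2ef$, and $(e-f)(e+f-2ef)=e-f$. Hence $u=u^{3^k}=0$ for large $k$. Conjugation by $g\in G$ is a ring automorphism of $\mathbb{Z}_{p^r}G$ that commutes with reduction modulo $p$.

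\textbf{Part (i).} I apply this observation inside $\mathbb{Z}_{p^r}H$. By Lemma \ref{lemma_lifting_base_idempotent}, $\omega_{\mathcal{C}}(H,K)$ is central in $\mathbb{Z}_{p^r}H$. For $g\in N_G(H)$, the conjugate $g^{-1}\omega_{\mathcal{C}}(H,K)g$ is again a central idempotent of $\mathbb{Z}_{p^r}H$, and it reduces to $g^{-1}\varepsilon_{\mathcal{C}}(H,K)g$. So if $g$ centralises $\varepsilon_{\mathcal{C}}(H,K)$, uniqueness gives that $g$ centralises $\omega_{\mathcal{C}}(H,K)$.

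It remains to see that $\operatorname{Cen}_G(\varepsilon_{\mathcal{C}}(H,K))=E_G(H/K)\subseteq N_G(K)$, and that $H\unlhd N_G(K)$ by the strong Shoda condition; this puts such $g$ in $N_G(H)$. The reverse inclusion is immediate by reducing modulo $p$. Alternatively, $\omega$ is given by an explicit polynomial in $\varepsilon$ with integer coefficients, so anything centralising $\varepsilon$ centralises $\omega$. This is the cleaner route, and I would use it.

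Consequently the distinct $G$-conjugates of $\omega$ and of $\varepsilon$ correspond through a common transversal $T$ of $\operatorname{Cen}_G$ in $G$. Reducing termwise then gives $\overline{w_{\mathcal{C}}(G,H,K)}=e_{\mathcal{C}}(G,H,K)$.

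\textbf{Part (ii).} Write $w=w_{\mathcal{C}}(G,H,K)=\sum_{t\in T}t^{-1}\omega t$.

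For idempotence, I need the conjugates $t^{-1}\omega t$ with $t\in T$ to be mutually orthogonal. Their reductions $t^{-1}\varepsilon t$ are orthogonal, by condition (iii) of a strong Shoda pair together with \cite{BdR07}. The product $(t^{-1}\omega t)(s^{-1}\omega s)$ is an idempotent, since $\omega$ is a polynomial in $\varepsilon$ and, I expect, conjugates of $\varepsilon$ commute as in \cite{BdR07}. It also reduces to $0$, so it is an idempotent in a nil ideal and therefore vanishes. Hence $w^2=w$.

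For centrality, $w$ is a sum over a full conjugacy orbit, so it commutes with every $g\in G$ and hence with all of $\mathbb{Z}_{p^r}G$.

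For orthogonality between different classes $\mathcal{C}\neq\mathcal{C}'$: the product $w_{\mathcal{C}}w_{\mathcal{C}'}$ is a central idempotent, since the two factors are commuting central idempotents, and it reduces to $e_{\mathcal{C}}e_{\mathcal{C}'}=0$. So it is $0$.

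Distinctness follows from distinctness of the reductions, by Theorem 7 of \cite{BdR07}.

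For primitivity, suppose $w=e_1+e_2$ with $e_1,e_2$ nonzero orthogonal central idempotents. Reducing gives $e_{\mathcal{C}}=\bar e_1+\bar e_2$. A nonzero idempotent cannot reduce to $0$, because it would then be nilpotent. So both $\bar e_1$ and $\bar e_2$ are nonzero, which contradicts the primitivity of $e_{\mathcal{C}}$.

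\textbf{Main obstacle.} The delicate step is the orthogonality of the $G$-conjugates of $\omega$, which requires knowing that their products are idempotents. If the commutation of conjugates of $\varepsilon$ is not available, I would instead argue via the primitive central idempotents of $\mathbb{Z}_{p^r}G$. These correspond bijectively to those of $\mathbb{F}_pG$, by lifting and the uniqueness observation above. I would then express $w$ as the unique central lift of $e_{\mathcal{C}}(G,H,K)$, using the fact that $w$ is visibly $G$-invariant.
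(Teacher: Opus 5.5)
Part (i), centrality and primitivity are fine. In fact, your uniqueness lemma for commuting idempotents with equal reduction, together with termwise reduction over a common transversal, gives (i) more cleanly than the paper's computation, which already presupposes that the conjugates of $\omega$ are orthogonal.

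The genuine gap is the step you flagged yourself: the mutual orthogonality of the conjugates $t^{-1}\omega t$, which is exactly what $w^2=w$ requires.

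\begin{itemize}
\item \emph{Why your justification fails.} That conjugates of $\varepsilon_{\mathcal C}(H,K)$ commute is a statement in $\F_pG$. It only tells you that the lifts commute modulo $p$, and lifts of orthogonal idempotents need not be orthogonal. For instance, in $M_2(\Z_{p^2})$ the idempotents $E_{11}$ and $E_{22}+pE_{12}$ have orthogonal reductions, yet $E_{11}(E_{22}+pE_{12})=pE_{12}\neq 0$.
\item \emph{Why the fallback fails.} It is circular: uniqueness of central lifts is a statement about idempotents, and $G$-invariance of $w$ gives centrality, not $w^2=w$.
\item \emph{The paper does not settle it either.} Its appeal to Proposition 27.4 of Anderson--Fuller only produces \emph{some} orthogonal family of lifts, not orthogonality of these particular conjugates.
\end{itemize}

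The step can be repaired with your own lemma plus the strong Shoda conditions. It suffices to show $\omega\,g^{-1}\omega g=0$ for $g\notin\operatorname{Cen}_G(\omega)$.

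\begin{itemize}
\item \emph{Case $g\in N_G(K)$.} Since $H\unlhd N_G(K)$, $g$ normalises $H$. So $\omega$ and $g^{-1}\omega g$ are both central idempotents of $\Z_{p^r}H$; they commute, and your nil-ideal argument kills their product.
\item \emph{Case $g\notin N_G(K)$.} The element $\varepsilon(H,K)$ has coefficients in $\Z[1/|H|]$. Hence it is a central idempotent of $\Z_{p^r}H$ whose reduction is $\sum_{\mathcal C'\in\mathcal C_p(H/K)}\varepsilon_{\mathcal C'}(H,K)$. Uniqueness in $\Z_{p^r}H$ then gives $\omega=\omega\,\varepsilon(H,K)=\varepsilon(H,K)\,\omega$. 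Condition (iii) is an identity with coefficients in $\Z[1/|G|]$, so it holds in $\Z_{p^r}G$, and therefore
\[
\omega\, g^{-1}\omega g=\omega\,\bigl(\varepsilon(H,K)\,g^{-1}\varepsilon(H,K)\,g\bigr)\,g^{-1}\omega g=0.
\]
\end{itemize}

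There is a second problem, inherited from the statement itself. Distinct classes $\mathcal C\neq\mathcal C'$ need not give distinct, let alone orthogonal, idempotents, and Theorem 2.1 does not assert that they do. Take $G=S_3$, $(H,K)=(C_3,1)$ and $p=7$. The classes $\{\chi\}$ and $\{\chi^2\}$ of $\mathcal C_7(C_3)$ are interchanged by a transposition. So $e_{\{\chi\}}(G,H,K)=e_{\{\chi^2\}}(G,H,K)=1-\widehat{C_3}$, and hence $w_{\{\chi\}}(G,H,K)=w_{\{\chi^2\}}(G,H,K)$.

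Your distinctness step and the claim $e_{\mathcal C}e_{\mathcal C'}=0$ are valid only for $\mathcal C,\mathcal C'$ in different $N_G(K)$-orbits. The paper's proof, which simply cites Broche--del R\'{\i}o here, has the same defect. Once $\mathcal C$ is restricted to orbit representatives, your cross-class argument goes through.
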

\begin{proof} $(i)$ It follows from the definition of $\omega_{\mathcal{C}}(H,K)$ that $\operatorname{Cen}_{G}(\omega_{\mathcal{C}}(H,K))=\operatorname{Cen}_{G}(\varepsilon_{\mathcal{C}}(H,K))$. Let $T$ be a transversal of $\operatorname{Cen}_{G}(\varepsilon_{\mathcal{C}}(H,K))$ in $G$ and let $n$ be the nilpotency index of $t^{-1}\varepsilon_{\mathcal{C}}(H,K)t$ in $\mathbb{Z}_{p^r}H$, which is same for every $t\in T$. Then considering the orthogonality conditions and direct computations, we get $(i)$. More explicitly,   
	\begin{align*}
		w_{\mathcal{C}}(G,H,K) &=\sum\limits_{t\in T}t^{-1}\omega_{\mathcal{C}}(H,K)t\\
		&=\sum\limits_{t\in T}t^{-1}\left(\sum\limits_{j=1}^{n}(-1)^{j-1}\binom{n}{j}\varepsilon_{\mathcal{C}}(H,K)^j\right)^{n}t \\
		&=\left(\sum\limits_{t\in T}\sum\limits_{j=1}^{n}(-1)^{j-1}\binom{n}{j}(t^{-1}\varepsilon_{\mathcal{C}}(H,K)t)^j\right)^{n}\\
		&=\left(\sum\limits_{j=1}^{n}(-1)^{j-1}\binom{n}{j}\sum\limits_{t\in T}(t^{-1}\varepsilon_{\mathcal{C}}(H,K)t)^j\right)^{n}\\
		&=\left(\sum\limits_{j=1}^{n}(-1)^{j-1}\binom{n}{j}(\sum\limits_{t\in T}t^{-1}\varepsilon_{\mathcal{C}}(H,K)t)^j\right)^{n}\\
		&=\left(\sum\limits_{j=1}^{n}(-1)^{j-1}\binom{n}{j}e_{\mathcal{C}}(G,H,K)^j\right)^{n}.
	\end{align*} 
\noindent Consequently, $\overline{w_{\mathcal{C}}(G,H,K)}=e_{\mathcal{C}}(G,H,K)$. \\
$(ii)$ Further, as $\{t^{-1}\varepsilon_{\mathcal{C}}(H,K)t~|~t\in T\}$ is a set of orthogonal idempotents in $\mathbb{F}_pG$, the lifting of $t^{-1}\varepsilon_{\mathcal{C}}(H,K)t$ is $t^{-1}\omega_{\mathcal{C}}(H,K)t$, which yields that $\{t^{-1}\omega_{\mathcal{C}}(H,K)t~|~t\in T\}$ is an orthogonal set in $\mathbb{Z}_{p^r}G$ \cite[Proposition 27.4]{AF92}. Consequently, $(ii)$ holds by \cite{BdR07}.
\end{proof}
\noindent As a consequence of Proposition \ref{prop_lifting_induced_idempotents} and Theorem 2.1, we observe that if $G$ is a group with a set $S$ of strong Shoda pairs such that $e_{\mathcal{C}}(G, H, K)$, where $(H, K) \in S$ and $\mathcal{C} \in \mathcal{C}_{p}(H/K)$ is a complete set of primitive central idempotents of $\mathbb{F}_{p}G$, then $w_{\mathcal{C}}(G, H, K)$, where $(H, K) \in S$ and $\mathcal{C} \in \mathcal{C}_{p}(H/K)$ is the complete set of primitive central idempotents of $\mathbb{Z}_{p^r} G$. 
\begin{Corollary}\label{SMG}
	If $G$ is a strongly monomial group and $S$ is a complete and irredundant set of strong Shoda pairs of $G$, then $\{w_{\mathcal{C}}(G, H, K)~|~(H, K) \in S ~{\rm and}~ \mathcal{C} \in \mathcal{C}_{p}(H/K)\}$ is the complete set of primitive central orthogonal idempotents of $\mathbb{Z}_{p^r} G$. Consequently, 
	$$\mathbb{Z}_{p^r}Gw_{\mathcal{C}}(G,H,K) \cong  M_{[G:H]}(\mathbb{Z}_{p^r}(\xi_{l_{(H,K)}})), $$
	
	and
	
	$$\Z_{p^r}G \cong \underset{\mathcal{C}\in \mathcal{C}_p(H/K)}{\underset{(H,K)\in S}{\bigoplus}}\mathbb{Z}_{p^r}Gw_{\mathcal{C}}(G,H,K) \cong \bigoplus M_{[G:H]}(\mathbb{Z}_{p^r}(\xi_{l_{(H,K)}})), $$
	where $C = \operatorname{Cen}_{G}(\omega_{\mathcal{C}}(H,K))$, $l_{(H,K)}=\frac{o}{[C:H]}$ and $o$ is the multiplicative order of $p$ modulo $[H:K]$.
\end{Corollary}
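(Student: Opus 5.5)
The plan is to reduce modulo $p$ and combine Theorem 2.1 (the $\F_pG$ description via strong Shoda pairs) with Proposition \ref{prop_lifting_induced_idempotents} and Theorem \ref{structure}.

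First, since $G$ is strongly monomial and $S$ is complete and irredundant, the elements $e_{\mathcal{C}}(G,H,K)$ with $(H,K)\in S$ and $\mathcal{C}\in\mathcal{C}_p(H/K)$ form the complete set of primitive central idempotents of $\F_pG$ \cite{BdR07}. In particular they are pairwise orthogonal and sum to $1$. By Proposition \ref{prop_lifting_induced_idempotents}, each $w_{\mathcal{C}}(G,H,K)$ is a central idempotent of $\Z_{p^r}G$ reducing to $e_{\mathcal{C}}(G,H,K)$, and these lifts are pairwise orthogonal. I would handle lifts attached to different pairs in $S$ explicitly: their reductions are orthogonal, and \cite[Proposition 27.4]{AF92} applies because $p\Z_{p^r}G$ is nilpotent.

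Next, set $f=\sum w_{\mathcal{C}}(G,H,K)$. Then $f$ is an idempotent with $\bar f=1$, so $1-f$ is an idempotent lying in the nilpotent ideal $p\Z_{p^r}G$. Hence $1-f=0$, which gives completeness. For primitivity, suppose $w=e_1+e_2$ with $e_1,e_2$ orthogonal nonzero central idempotents. Reducing modulo $p$ and using primitivity of $\bar w$ in $\F_pG$ forces some $\bar e_i=0$. Then $e_i$ is a nilpotent idempotent, so $e_i=0$, a contradiction. This step also uses that a nonzero idempotent cannot lie in $p\Z_{p^r}G$. This yields the decomposition $\Z_{p^r}G\cong\bigoplus \Z_{p^r}G\,w_{\mathcal{C}}(G,H,K)$.

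To identify each component, I would argue via matching. Theorem 2.1 gives $\F_pG\,e_{\mathcal{C}}(G,H,K)\cong M_{[G:H]}(\F_{p^{o/[E:H]}})$. By Proposition \ref{prop_lifting_induced_idempotents}(i), $E=\operatorname{Cen}_G(\varepsilon_{\mathcal{C}}(H,K))=\operatorname{Cen}_G(\omega_{\mathcal{C}}(H,K))=C$, so the field degree is $l_{(H,K)}$. Theorem \ref{structure} then gives $\Z_{p^r}G\cong\bigoplus M_{n_i}(\Z_{p^r}[\xi_{m_i}])$, indexed by the Wedderburn components of $\F_pG$.

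The main obstacle is to ensure that the block $\Z_{p^r}Gw$ is exactly the summand corresponding to $\F_pG\bar w$, rather than merely knowing that the multisets agree. I would argue as follows. The summands $M_{n_i}(\Z_{p^r}[\xi_{m_i}])$ have connected centre, since $\Z_{p^r}[\xi_{m_i}]$ is local and its only idempotents are $0$ and $1$. Therefore the identities of these summands are precisely the primitive central idempotents of $\Z_{p^r}G$, and by the previous paragraph these are the $w$'s. Since the isomorphism $\phi$ of Theorem \ref{structure} reduces modulo $p$ to $\bar\phi$, the $w$ with $\bar w=e_{\mathcal{C}}(G,H,K)$ goes to the summand whose reduction is the component $\F_pGe_{\mathcal{C}}(G,H,K)$. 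That summand has parameters $n_i=[G:H]$ and $m_i=l_{(H,K)}$. An alternative route is to observe that $\Z_{p^r}Gw$ is a free $\Z_{p^r}$-algebra whose reduction modulo $p$ is $M_{[G:H]}(\F_{p^{l}})$, and to invoke uniqueness of such lifts (Azumaya algebras over the local ring $\Z_{p^r}$ with split reduction). I would use the first, more elementary, argument.
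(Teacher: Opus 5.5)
Your proposal is correct and follows the paper's route. The paper only records the corollary as a consequence of Proposition~\ref{prop_lifting_induced_idempotents}, Theorem~2.1 and Theorem~\ref{structure}; you supply the details it leaves implicit: orthogonality across different pairs, completeness via the idempotent $1-f\in p\Z_{p^r}G$, primitivity by reduction, and the block matching using $E=C$.

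Two small repairs are needed.

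\textbf{Orthogonality across different pairs.} For $w,w'$ attached to different pairs, $ww'$ is a central idempotent lying in the nilpotent ideal $p\Z_{p^r}G$, hence $ww'=0$. This is the correct justification; \cite[Proposition 27.4]{AF92} only asserts that \emph{some} orthogonal lifts exist, not that given lifts are orthogonal.

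\textbf{Matching of blocks.} Do not rely on the explicit digit-wise $\phi$ from the proof of Theorem~\ref{structure}. That map ignores carries and is not even additive: for $p=3$, $r=2$, $G=C_2=\langle g\rangle$ and $\bar\phi(a+bg)=(a+b,a-b)$, it gives $\phi(1)+\phi(g)=(2,3)\neq(2,0)=\phi(1+g)$ in $\Z_9\oplus\Z_9$. Instead, take any ring isomorphism. It sends $w$ to the identity of a single summand $M_{n_i}(\Z_{p^r}[\xi_{m_i}])$. Then compare $\Z_{p^r}Gw/p\Z_{p^r}Gw\cong\F_pG\bar w$ with $M_{n_i}(\F_{p^{m_i}})$, using Theorem~2.1 and Wedderburn uniqueness to get $n_i=[G:H]$ and $m_i=l_{(H,K)}$.
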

\noindent Now, we want to explore the above isomorphism for strongly monomial groups $G$.
\begin{Lemma}\label{Lemma1}
Let $(H,K)$ be a strong Shoda pair of $G$ and let $T=\{t_i~|~1\leq i\leq k\}$ be a right transversal of $C$ in $G$, where $C = \operatorname{Cen}_{G}(\omega_{\mathcal{C}}(H,K))$. Then $\mathbb{Z}_{p^r}Gw_{\mathcal{C}}(G,H,K)\cong M_{k}(\mathbb{Z}_{p^r}C\omega_{\mathcal{C}}(H,K))$ and the map is given by $\alpha \mapsto (\alpha_{ij})_{k\times k}$, where $\alpha_{ij}=\omega_{\mathcal{C}}(H,K)t_j \alpha t_{i}^{-1}\omega_{\mathcal{C}}(H,K)$.
\end{Lemma}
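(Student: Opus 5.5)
The plan is to show that $w:=w_{\mathcal{C}}(G,H,K)$ splits as a sum of $k$ pairwise orthogonal idempotents $e_i:=t_i^{-1}\omega t_i$, where $\omega:=\omega_{\mathcal{C}}(H,K)$, and that these $e_i$ are linked by the units $t_i$. The matrix isomorphism then comes from the standard matrix-unit argument. Throughout, $C=\operatorname{Cen}_G(\omega)$.

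\textbf{Step 1: the decomposition of $w$.} By Proposition~\ref{prop_lifting_induced_idempotents}(i), $C=\operatorname{Cen}_G(\varepsilon_{\mathcal{C}}(H,K))$. Hence the distinct $G$-conjugates of $\omega$ are exactly $e_i$ for $1\le i\le k$, and $w=\sum_i e_i$. From the proof of Proposition~\ref{prop_lifting_induced_idempotents}(ii), the $e_i$ are pairwise orthogonal idempotents in $\mathbb{Z}_{p^r}G$.

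\textbf{Step 2: matrix units.} Put $E_{ij}:=t_i^{-1}\omega t_j$. Then
$$E_{ij}E_{lm}=t_i^{-1}\omega\, t_jt_l^{-1}\,\omega t_m=t_i^{-1}(\omega\, e'\,)\ldots$$
more precisely, $t_jt_l^{-1}\omega t_lt_j^{-1}=t_j(e_l)t_j^{-1}$ is a conjugate of $\omega$ equal to $\omega$ only when $l=j$. So $E_{ij}E_{lm}=\delta_{jl}E_{im}$, using orthogonality of distinct conjugates and $\omega^2=\omega$. Also $\sum_i E_{ii}=w$. By the standard theorem on matrix units, $R:=\mathbb{Z}_{p^r}Gw\cong M_k(E_{11}RE_{11})$. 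Concretely, with the $t_i$ indexed so that the map of the statement is the relevant one, $\alpha\mapsto(\omega t_j\alpha t_i^{-1}\omega)$ is exactly the map of the statement up to transpose and indexing convention. I will check that this assignment is additive and multiplicative. Multiplicativity uses $\sum_l t_l^{-1}\omega t_l=w$ and $w\alpha=\alpha$: one inserts $w$ between $\alpha$ and $\beta$ in $\omega t_j\alpha\beta t_i^{-1}\omega$. Bijectivity follows because $\alpha=\sum_{i,j}t_i^{-1}\alpha_{ji}\,t_j$ (suitably indexed) recovers $\alpha$ from its matrix, again via $w=\sum e_i$.

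\textbf{Step 3: identifying the corner ring.} It remains to show $\omega\,\mathbb{Z}_{p^r}G\,\omega=\mathbb{Z}_{p^r}C\omega$. Since $\omega$ is central in $\mathbb{Z}_{p^r}C$ (it commutes with $C$ by definition of $C$ and with $H$ by centrality in $\mathbb{Z}_{p^r}H$), the inclusion $\supseteq$ is clear. For $\subseteq$, write $G=\bigsqcup_x Cx$. For $g=cx$ with $x\notin C$ we get $\omega g\omega=c\,\omega (x\omega x^{-1})x=0$, by orthogonality of distinct conjugates. So only the summands with $g\in C$ survive, giving $\omega g\omega=g\omega$.

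This last step is the main obstacle. It needs $x\omega x^{-1}\ne\omega$ to imply $\omega\cdot x\omega x^{-1}=0$ for every $x\notin C$, not just for transversal elements. In $\mathbb{F}_pG$ this holds by \cite{BdR07} (condition (iii) of strong Shoda pairs together with the cyclotomic refinement). I would transfer it to $\mathbb{Z}_{p^r}G$ by the same lifting argument of \cite[Proposition 27.4]{AF92} used in Proposition~\ref{prop_lifting_induced_idempotents}: orthogonal idempotents modulo the nil ideal $p\mathbb{Z}_{p^r}G$ lift to orthogonal idempotents, and the lift of a conjugate is the conjugate of the lift, since the lifting formula is a polynomial.
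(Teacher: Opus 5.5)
Your argument is correct, but it reaches the matrix decomposition by a different mechanism than the paper.

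The paper works module-theoretically. It identifies $\mathbb{Z}_{p^r}Gw_{\mathcal{C}}(G,H,K)$ with its endomorphism ring and splits the module as $\bigoplus_i \mathbb{Z}_{p^r}G\,t_i^{-1}\omega t_i\cong(\mathbb{Z}_{p^r}G\omega)^k$ via $\gamma\mapsto\gamma t_i^{-1}$. It then passes to $M_k(\operatorname{End}(\mathbb{Z}_{p^r}G\omega))$ and uses $\operatorname{End}(\mathbb{Z}_{p^r}G\omega)\cong\omega\mathbb{Z}_{p^r}G\omega$. You instead exhibit the matrix units $t_i^{-1}\omega t_j$ and verify the explicit map by hand. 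The final identification $\omega\mathbb{Z}_{p^r}G\omega=\mathbb{Z}_{p^r}C\omega$, via orthogonality of distinct conjugates, is the same in both.

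The paper's route is the conceptual one familiar from semisimple group algebras. As written, though, it suppresses the opposite-ring bookkeeping: left multiplication by $\alpha$ is not an endomorphism of the left module, and $\operatorname{End}(Re)\cong(eRe)^{\mathrm{op}}$. This is exactly where the reversed indices in the statement come from. Your direct computation is what pins this down, so make sure the map whose multiplicativity you check is $\alpha\mapsto(\omega t_i\alpha t_j^{-1}\omega)_{ij}$. With the statement's $\alpha_{ij}=\omega t_j\alpha t_i^{-1}\omega$ one gets $(\alpha\beta)_{ij}=\sum_l\alpha_{lj}\beta_{il}$. Since $\mathbb{Z}_{p^r}C\omega$ is noncommutative as soon as $C\neq H$, the transpose is not cosmetic. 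For bijectivity you also need both composites; the one you do not write out uses $\omega t_ut_i^{-1}\omega=\delta_{ui}\omega$.

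The obstacle you flag at the end is already settled by your Step 1. Every $G$-conjugate of $\omega$ equals some $e_i$, and the $e_i$ are pairwise orthogonal, so $\omega\cdot x\omega x^{-1}=0$ for every $x\notin C$. Here you and the paper rely on exactly the same input from Proposition~\ref{prop_lifting_induced_idempotents}.

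Your fallback would not stand on its own. The Anderson--Fuller lifting result produces some orthogonal family of lifts by an inductive construction. It does not say that independently defined polynomial lifts are orthogonal; a priori their product only lies in $p\mathbb{Z}_{p^r}G$.

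An independent argument would go as follows.
\begin{itemize}
\item For $x\in N_G(K)\setminus C$, both $\omega$ and $x\omega x^{-1}$ are central idempotents of $\mathbb{Z}_{p^r}H$. Their product is therefore an idempotent in the nil ideal $p\mathbb{Z}_{p^r}H$, hence $0$.
\item For $x\notin N_G(K)$, use $\omega=\omega\,\varepsilon(H,K)$, which follows by the same nil-ideal argument. Combine this with condition (iii), which holds over $\mathbb{Z}[1/|G|]$ and therefore in $\mathbb{Z}_{p^r}G$.
\end{itemize}
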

\begin{proof} Define a map $\theta_1:\mathbb{Z}_{p^r}Gw_{\mathcal{C}}(G,H,K)\rightarrow \operatorname{End}_{\mathbb{Z}_{p^r}G}(\mathbb{Z}_{p^r}Gw_{\mathcal{C}}(G,H,K))$ given by $\alpha \mapsto {\theta_{1,\alpha}}$, where ${\theta_{1,\alpha}}(x)=\alpha x$ for $x \in \mathbb{Z}_{p^r}Gw_{\mathcal{C}}(G,H,K)$. This map is a ring isomorphism, since multiplication by elements of the group ring defines $\Z_{p^r}G$-module endomorphisms. Moreover, we have the decomposition $\mathbb{Z}_{p^r}Gw_{\mathcal{C}}(G,H,K) =\bigoplus_{i=1}^{k}\mathbb{Z}_{p^r}G\omega_{\mathcal{C}}(H,K)^{t_i}$. For each $1\leq i\leq k$, the module $\mathbb{Z}_{p^r}G\omega_{\mathcal{C}}(H,K)^{t_k}$ is isomorphic to $ \mathbb{Z}_{p^r}G\omega_{\mathcal{C}}(H,K)$ as $\Z_{p^r}G$-modules. The isomorphism is given by $\gamma\mapsto \gamma t_{k}^{-1}$. Hence, it follows that $\mathbb{Z}_{p^r}Gw_{\mathcal{C}}(G,H,K)$ is isomorphic to direct sum of $k$ copies of $\mathbb{Z}_{p^r}G\omega_{\mathcal{C}}(H,K)$ as $\Z_{p^r}G$ modules. Let $\pi_i:\mathbb{Z}_{p^r}Gw_{\mathcal{C}}(G,H,K)\rightarrow \mathbb{Z}_{p^r}G\omega_{\mathcal{C}}(H,K)$ be the projection onto its $i$-th component and let $\epsilon_j:  \mathbb{Z}_{p^r}G\omega_{\mathcal{C}}(H,K) \rightarrow \mathbb{Z}_{p^r}Gw_{\mathcal{C}}(G,H,K)$ be the inclusion into $j$-th component, where $1\leq i,j\leq k$. The map $\theta_2: \operatorname{End}_{\mathbb{Z}_{p^r}G}(\mathbb{Z}_{p^r}Gw_{\mathcal{C}}(G,H,K)) \rightarrow M_{k}(\operatorname{End}_{\mathbb{Z}_{p^r}G}(\mathbb{Z}_{p^r}G\omega_{\mathcal{C}}(H,K)))$ given by $f\mapsto (f_{ij})_{k}$, where $f_{ij}=(\pi_{i} \circ f\circ \epsilon_{j} ),$ is a ring isomorphism. Finally, the map $\theta_3: M_{k}(\operatorname{End}_{\mathbb{Z}_{p^r}G}(\mathbb{Z}_{p^r}G\omega_{\mathcal{C}}(H,K)))\rightarrow M_{k}(\omega_{\mathcal{C}}(H,K)\mathbb{Z}_{p^r}G\omega_{\mathcal{C}}(H,K))$ given by $M_{k}(f_{ij})\mapsto M_{k}(f_{ij}(\omega_{\mathcal{C}}(H,K)))$ is also a ring isomorphism. Furthermore,\linebreak $\omega_{\mathcal{C}}(H,K)\mathbb{Z}_{p^r}G\omega_{\mathcal{C}}(H,K))=\mathbb{Z}_{p^r}C\omega_{\mathcal{C}}(H,K)$ as the distinct $G$ conjugates of $\omega_{\mathcal{C}}(H,K)$ are mutually orthogonal using Proposition \ref{prop_lifting_induced_idempotents}$(i)$. This proves the lemma.
\end{proof}
\begin{Lemma}
Let $(H,K)$ be a strong Shoda pair of $G$ and $C=\operatorname{Cen}_{G}(\omega_{\mathcal{C}}(H,K))$. Then the $\Z_{p^r}$-module $\Z_{p^r}C\omega_{\mathcal{C}}(H,K)$ is isomorphic to the crossed product $ \mathbb{Z}_{p^r}H\omega_{\mathcal{C}}(H,K)*_{\sigma}^{\tau}C/H$, where $\sigma$ is a faithful action and the twisting $\tau$ is trivial.
\end{Lemma}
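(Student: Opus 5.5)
We follow the classical argument for $\mathbb{F}_pC\varepsilon_{\mathcal{C}}(H,K)$ (cf.\ \cite{BdR07} and \cite[Section 2.6]{JdR16}), adapted to $\mathbb{Z}_{p^r}$ via the lifted idempotent $\omega:=\omega_{\mathcal{C}}(H,K)$. Since $(H,K)$ is a strong Shoda pair, $H\unlhd N_G(K)$. By Proposition~\ref{prop_lifting_induced_idempotents}(i), $C=\operatorname{Cen}_G(\omega)=E_G(H/K)$, and this subgroup lies between $H$ and $N_G(K)$. Hence $H\unlhd C$. Moreover, $\omega$ is central in $\mathbb{Z}_{p^r}H$ (Lemma~\ref{lemma_lifting_base_idempotent}) and is fixed by conjugation by every element of $C$, so $\omega$ is central in $\mathbb{Z}_{p^r}C$.

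\textbf{Setting up the crossed product.} Fix a transversal $\{c_x : x\in C/H\}$ of $H$ in $C$, with $c_H=1$. Using $H\unlhd C$ and the centrality of $\omega$,
$$\mathbb{Z}_{p^r}C\,\omega=\bigoplus_{x\in C/H}(\mathbb{Z}_{p^r}H\omega)\,c_x\omega .$$
This gives a free $R:=\mathbb{Z}_{p^r}H\omega$-module with basis of units $z_x=c_x\omega$. The action is $\sigma_x(a)=c_x^{-1}ac_x$ for $a\in R$; it preserves $R$ because $c_x$ normalizes $H$ and centralizes $\omega$. The twisting is $\tau_{x,y}=c_{xy}^{-1}c_xc_y\,\omega\in \mathcal{U}(R)$, so that $z_xz_y=z_{xy}\tau_{x,y}$. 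The cocycle identities then follow from associativity of the group ring, exactly as in the field case. This yields $\mathbb{Z}_{p^r}C\omega\cong R*^{\sigma}_{\tau}C/H$.

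\textbf{Faithfulness of $\sigma$.} By Corollary~\ref{SMG}-type reasoning, or directly since $H/K$ is cyclic, $R\cong \mathbb{Z}_{p^r}[\xi]$ is a Galois ring whose residue field is $\mathbb{F}_pH\varepsilon_{\mathcal{C}}(H,K)\cong\mathbb{F}_p(\zeta_{[H:K]})$. The action of $x=cH$ reduces mod $p$ to conjugation on $\mathbb{F}_pH\varepsilon_{\mathcal{C}}$, that is, to $\zeta\mapsto\zeta^{i_c}$ where $hK\mapsto h^cK$. If $\sigma_x$ were trivial, then $c$ would act trivially on $H/K$. Since $H/K$ is maximal abelian in $N_G(K)/K$, this forces $c\in H$. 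Here reduction mod $p$ is injective on $\operatorname{Aut}(R)$ restricted to Galois automorphisms, because the Galois group of a Galois ring maps isomorphically onto that of its residue field \cite{Wan03}. Hence $\sigma$ is faithful.

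\textbf{Triviality of the twisting (main obstacle).} The twisting $\tau$ takes values in the image of $H/K$, namely the roots of unity $\langle hK\omega\rangle\subseteq\mathcal{U}(R)$. To make it trivial, one must rechoose the $c_x$, or rescale $z_x$ by units of $R$, so that $\tau$ becomes a coboundary. First we would compare dimensions: Corollary~\ref{SMG} gives $\mathbb{Z}_{p^r}C\omega\cong M_{[C:H]}(\mathbb{Z}_{p^r}[\xi_{l}])$, with $l=o/[C:H]$. Since $\sigma$ is faithful with fixed ring $R^{C/H}$ of rank $l$, $R$ is Galois over $R^{C/H}$ with group $C/H$. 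By Hilbert 90 for Galois extensions of commutative rings, $H^1$ vanishes, and the class of $\tau$ in $H^2(C/H,\mathcal{U}(R))$ reduces to a class in the Brauer group of a finite local ring, which is trivial (Wedderburn's little theorem lifted via Azumaya algebras). Therefore $\tau$ is cohomologous to the trivial cocycle. After replacing $z_x$ by $z_xu_x$ with suitable $u_x\in\mathcal{U}(R)$, we get $\tau\equiv1$. This is the delicate point: the Brauer-group argument is the one we would rely on, with the explicit choice of $u_x$ as a fallback whenever $C/H$ is cyclic, via norm surjectivity for Galois rings.
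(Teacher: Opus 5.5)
Your proposal is correct, but the key step, triviality of the twisting, goes by a different route from the paper's.

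\textbf{The paper's route.} The paper identifies $S=\mathbb{Z}_{p^r}(\xi_{l_{(H,K)}})$ and observes that $\operatorname{Gal}(R/S)$ is generated by the Frobenius lift, so $C/H$ is cyclic. It then takes a generator $x$ with preimage $\bar x$ and notes that $\bar x^{[C:H]}\in S$. Using surjectivity of the norm, it replaces $\bar x$ by $\alpha\bar x$ with $(\alpha\bar x)^{[C:H]}=\omega_{\mathcal{C}}(H,K)$, so the cyclic crossed product has trivial twisting. Faithfulness of $\sigma$ is only asserted there.

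\textbf{Your route.} You write out the crossed-product data explicitly, with a convention consistent with the paper's $z_xz_y=z_{xy}\tau_{x,y}$. You actually prove faithfulness: a trivial $\sigma_x$ forces $c$ to centralize $H/K$, and $H/K$ is maximal abelian in $N_G(K)/K$. This fills in the paper's ``easy to see''. You then kill the cocycle through vanishing of $H^2(C/H,\mathcal{U}(R))$, via the Brauer group of a finite local ring.

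\textbf{What each buys.} Your argument is conceptual and would also handle a non-cyclic quotient. However, it is non-constructive and relies on Azumaya/Brauer machinery. The paper's argument is elementary and produces the explicit unit $\gamma=\alpha\bar x$, which is exactly what is needed later to write down matrix units in Theorem~\ref{MatrixUnits}.

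\textbf{Two points to tighten.}
\begin{enumerate}
\item Hilbert 90 ($H^1=0$) is not the ingredient your Brauer argument needs. To pass from ``the class of the crossed product in $\operatorname{Br}(S)$ is trivial'' to ``$\tau$ is a coboundary'', you need the map $H^2(C/H,\mathcal{U}(R))\to\operatorname{Br}(R/S)$ to be injective. In the Chase--Harrison--Rosenberg sequence this injectivity comes from $\operatorname{Pic}(R)=0$, which holds because $R$ is local.
\item Your ``fallback'' is not a special case. Since $\sigma$ is faithful, $C/H$ embeds in $\operatorname{Aut}(R)$. That group is cyclic, generated by the Frobenius lift of the Galois ring $R$ over $\mathbb{Z}_{p^r}$. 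So $C/H$ is always cyclic, and the norm argument suffices on its own; that is precisely the paper's proof. It uses two facts:
\begin{itemize}
\item $\bar x^{[C:H]}\in H\omega_{\mathcal{C}}(H,K)$ is a $\sigma$-fixed unit, hence lies in $\mathcal{U}(S)$;
\item the norm $\mathcal{U}(R)\to\mathcal{U}(S)$ is onto.
\end{itemize}
\end{enumerate}
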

\begin{proof}
\noindent The module $\mathbb{Z}_{p^r}C\omega_{\mathcal{C}}(H,K)$ is naturally identified by the crossed product \linebreak  $\mathbb{Z}_{p^r}H\omega_{\mathcal{C}}(H,K)*_{\sigma}^{\tau}C/H$, where the action $\sigma : C/H \rightarrow \operatorname{Aut}(\mathbb{Z}_{p^r}H\omega_{\mathcal{C}}(H,K))$ maps $x$ to the conjugation automorphism $(\sigma)_x$ on $\mathbb{Z}_{p^r}H\omega_{\mathcal{C}}(H,K)$ induced by the fixed inverse image $\bar{x}$ of $x$ under the natural map $C\rightarrow C/H$ and the twisting $\tau: C/H\times C/H \rightarrow \mathcal{U}(\mathbb{Z}_{p^r}H\omega_{\mathcal{C}}(H,K))$ is given by $\tau(x,y)=h\omega_{\mathcal{C}}(H,K)$, where $h\in H$ is such that $\bar{x}\bar{y}=h\overline{xy}$. It is easy to see that $(\sigma)_{x}$ is a faithful action on $\mathbb{Z}_{p^r}H\omega_{\mathcal{C}}(H,K).$ Let $R=\mathbb{Z}_{p^r}H\omega_{\mathcal{C}}(H,K)$ and let $S$ be the center subring of $\mathbb{Z}_{p^r}C\omega_{\mathcal{C}}(H,K)$. Using \Cref{SMG} and \Cref{Lemma1}, $S=\mathbb{Z}_{p^r}(\xi_{l_{(H,K)}})$, where $l_{(H,K)}=\frac{o}{[C:H]}$ and $o$ is the multiplicative order of $p$ modulo $[H:K]$. Then the Galois rings $R$ and $S$ are isomorphic to $GR(p^r,p^{ro})$ and $GR(p^r,p^{rl_{(H,K)}})$ respectively. Moreover, there exists a nonzero element $\zeta$ of order $p^o-1$ such that $R=S[\zeta]$ \cite[Theorem 14.27(i)]{Wan03}. Since the Galois group of $R$ over $S$ is generated by Frobenious automorphism,  $\phi : S\rightarrow S$ given by $\zeta \mapsto \zeta^{p^{l_{(H,K)}}}$, we have that $C/H$ is cyclic. If $C/H=\langle x\rangle$, then a basis of $R*_{\sigma}^{\tau}C/H$ over $R$ is $\{1, \bar{x},\cdots,\bar{x}^{|C/H|-1} \}$, where $\bar{x}$ is the fixed inverse image of $x$ under the natural map $C\rightarrow C/H$. Clearly, $\bar{x}^{|C/H|}\in S$. 
\para \noindent We now check that the twisting $\tau$ is trivial. By \cite[Proposition 2.6.7 (2)]{JdR16}, if $\bar{x}^{|C/H|}=\omega_{\mathcal{C}}(H,K)$, then the twisting is trivial. Otherwise, as the norm map $\mathcal{N}: R\rightarrow S$ is surjective \cite[Theorem 14.37 (ii)]{Wan03}, we can always choose $\alpha \in R$ such that $(\alpha \bar{x})^{|C/H|}= \omega_{\mathcal{C}}(H,K)$. Thus with this new basis $\{1,\alpha\bar{x}, \cdots, (\alpha\bar{x})^{|C/H|-1}\}$ of $R*_{\sigma}^{\tau}C/H$ over $R$, the twisting $\tau$ is trivial.
\end{proof}
\par\noindent  As the twisting is trivial, we denote the crossed product $\mathbb{Z}_{p^r}H\omega_{\mathcal{C}}(H,K)*_{\sigma}^{\tau}C/H$ by $(R/S,\sigma,1)$.
\para\noindent For any prime power $q$ and positive integer $n$, there is a primitive normal basis in $\F_{q^n}$ over $\F_q$ \cite{LS87}. Here by a primitive normal basis of $\F_{q^n}$ over $\F_q$, we mean a normal basis $\{\alpha,\alpha^q,\cdots,\alpha^{q^{n-1}}\}$
such that $\alpha$ also generates the multiplicative group of $\F_{q^n}$. We call such $\alpha$, a primitive normal element of $\F_{q^n}$.
\begin{Lemma}
Let $R$ over $S$ be a finite Galois ring extension with Galois group $\mathcal{G}=\operatorname{Gal}(R/S)$. Then, there exists a normal element $\beta\in R$ such that $\{\theta(\beta)~|~\theta \in \mathcal{G}\}$ forms a basis of $R$, considered as an $S$-module.  
\end{Lemma}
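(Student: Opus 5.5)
The plan is to reduce modulo $p$ and use the normal basis theorem for finite fields, then lift to $R$ by a Nakayama-type argument. Write $R=GR(p^r,p^{rm})$ and $S=GR(p^r,p^{rs})$ with $s\mid m$. Reduction modulo $p$ gives $\bar R=R/pR\cong \F_{p^m}$ and $\bar S=S/pS\cong \F_{p^s}$. The Galois group $\mathcal{G}=\operatorname{Gal}(R/S)$ is cyclic of order $m/s$, generated by the Frobenius automorphism $\phi:\zeta\mapsto \zeta^{p^{s}}$ on Teichmüller elements, as already used in the preceding lemma via \cite[Theorem 14.27]{Wan03}. Since every automorphism of $R$ preserves $pR$, reduction gives a natural map $\mathcal{G}\to\operatorname{Gal}(\bar R/\bar S)$. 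This map sends $\phi$ to the Frobenius $x\mapsto x^{p^s}$, and because both groups are cyclic of order $m/s$, it is an isomorphism.

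Next I will invoke the normal basis theorem for $\F_{p^m}/\F_{p^s}$ (or \cite{LS87} for a primitive normal element) to choose $\bar\beta\in\bar R$ such that $\{\bar\theta(\bar\beta)\mid\bar\theta\in\operatorname{Gal}(\bar R/\bar S)\}$ is an $\bar S$-basis of $\bar R$. I then take any lift $\beta\in R$ of $\bar\beta$. By compatibility of reduction with the Galois action, $\overline{\theta(\beta)}=\bar\theta(\bar\beta)$ for every $\theta\in\mathcal{G}$.

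The key step is to show that $\{\theta(\beta)\mid\theta\in\mathcal{G}\}$ is an $S$-basis of $R$, and I expect this lifting to be the main obstacle. Since $R$ is a free $S$-module of rank $m/s$ \cite{Wan03}, fix an $S$-basis $b_1,\dots,b_{m/s}$ and let $A\in M_{m/s}(S)$ express the elements $\theta(\beta)$ in this basis. Reducing modulo $p$, $\bar A$ expresses a basis of $\bar R$ in terms of the basis $\bar b_i$, so $\det\bar A\neq0$ in $\bar S$. Hence $\det A\notin pS$, the unique maximal ideal of the local ring $S$, so $\det A$ is a unit and $A\in GL_{m/s}(S)$. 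Therefore the $\theta(\beta)$ form an $S$-basis. Alternatively, one could argue by Nakayama's lemma: the $\theta(\beta)$ generate $R$ modulo $pR$, hence generate $R$, and $m/s$ generators of a free module of rank $m/s$ over a commutative ring form a basis.

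If desired, one can choose $\beta$ to be the Teichmüller lift of a primitive normal element, so that $\beta$ also has multiplicative order $p^m-1$. This is not needed for the statement, but it parallels the remark on primitive normal bases preceding the lemma.
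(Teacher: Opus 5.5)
Your proposal is correct and follows essentially the same route as the paper: reduce modulo $p$, identify $\operatorname{Gal}(R/S)$ with the Galois group of the residue field extension, take a normal element there, and lift it to $R$. Your determinant argument (or the Nakayama alternative) in the local ring $S$ supplies the justification the paper omits, since the paper simply asserts without proof that the lifted conjugates are linearly independent over $S$.
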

\begin{proof} Clearly, $R/pR$ over $S/pS$ is a finite Galois field extension. If $\mathcal{G}=\langle \theta \rangle$, then the Galois group of $R/pR$ over $S/pS$ is $\mathcal{\bar{G}}=\langle \overline{\theta} \rangle$ \cite[Theorem 14.32]{Wan03}. Hence, there exists a normal element $\alpha$ in $R/pR$ such that $\bar{B}=\{\bar{\theta}(\alpha)~|~\bar{\theta} \in \mathcal{\bar{G}}\}$ is a basis of $R/pR$ over $S/pS$.  Observe that the set $\{\theta(\alpha.1_{R})~|~\theta \in \mathcal{G}\}$, where $1_{R}$ is the identity of $R$, is linearly independent over $S$. Call $\alpha.1_{R}=\beta$, we obtain the required basis for $R$.
\end{proof}
\noindent Analogous to \cite[Proposition 2.6.7 (ii)]{JdR16}, we have the following lemma.
\begin{Lemma}\label{psi}
Let $R$ over $S$ be a finite Galois extension and $n=|\operatorname{Gal}(R/S)|$.
The crossed product $(R/S,\sigma,1)$ is ring isomorphic to $M_{n}(S)$, via isomorphism
$\psi: (R/S,\sigma, 1) \rightarrow M_{n}(S)$ given by
$xu_{\sigma} \mapsto [l_x\circ \sigma]_{B}$,
for $x\in R$ and $\sigma \in \operatorname{Gal}(R/S)$, where $B$ is an $S$-basis of $R$ and $l_x$ denotes multiplication by $x$ on $R$.
\end{Lemma}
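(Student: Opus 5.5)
We show that $\psi$ is a ring homomorphism, that it is injective via Dedekind independence of automorphisms reduced mod $p$, and that it is surjective by a rank count over $S$. Let $\mathcal{G}=\operatorname{Gal}(R/S)$. Fix the $S$-basis $B$ of $R$; the preceding lemma supplies a normal basis, but any $S$-basis works, since $R$ is a free $S$-module of rank $n$. Note also that $(R/S,\sigma,1)=\bigoplus_{\sigma\in\mathcal{G}} R u_\sigma$ is free over $R$ with trivial twisting. The maps $l_x\circ\sigma$ for $x\in R$, $\sigma\in\mathcal{G}$ are $S$-linear endomorphisms of $R$, because $\sigma$ fixes $S$. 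So $\psi$, extended additively, lands in $\operatorname{End}_S(R)\cong M_n(S)$, and taking matrices with respect to $B$ is a ring isomorphism.

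\textbf{Homomorphism.} In the crossed product the rules are $u_\sigma y=\sigma(y)u_\sigma$ and $u_\sigma u_\tau=u_{\sigma\tau}$. In $\operatorname{End}_S(R)$ we have $\sigma\circ l_y=l_{\sigma(y)}\circ\sigma$. Hence
\[(l_x\circ\sigma)\circ(l_y\circ\tau)=l_{x\sigma(y)}\circ\sigma\tau,\]
which matches $(xu_\sigma)(yu_\tau)=x\sigma(y)u_{\sigma\tau}$. Additivity and $\psi(1)=I$ are clear.

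\textbf{Injectivity.} This is the main obstacle. We must show that $\sum_{\sigma}l_{x_\sigma}\circ\sigma=0$ forces all $x_\sigma=0$, i.e.\ a Dedekind-type independence of the automorphisms over the ring $R$, which is not a field. I would argue by descending on $p$-adic valuation, as in the proof of Theorem \ref{structure}.

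Suppose $\sum_\sigma x_\sigma\sigma(y)=0$ for all $y\in R$, with not all $x_\sigma$ zero. Let $p^s$ be the largest power of $p$ dividing every $x_\sigma$ in $R$. This makes sense because every element of the Galois ring $R$ is $p^t$ times a unit, by \cite[Theorem 14.8]{Wan03}. Write $x_\sigma=p^s x'_\sigma$, where some $x'_\sigma$ is a unit. Then $\sum_\sigma x'_\sigma\sigma(y)$ lies in the annihilator of $p^s$, which is $p^{r-s}R$. Since $s<r$, this annihilator is contained in $pR$.

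Reducing mod $p$ gives $\sum_\sigma \overline{x'_\sigma}\,\bar\sigma(\bar y)=0$ on the field $R/pR$. By \cite[Theorem 14.32]{Wan03}, reduction identifies $\mathcal{G}$ with $\operatorname{Gal}((R/pR)/(S/pS))$, so the $\bar\sigma$ are distinct automorphisms of $R/pR$. Dedekind's independence of characters then forces all $\overline{x'_\sigma}=0$, contradicting that some $x'_\sigma$ is a unit.

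\textbf{Surjectivity.} Both sides are free $S$-modules of rank $n^2$: the crossed product has rank $n$ over $R$, hence rank $n\cdot n$ over $S$. So $\psi$ is an injective $S$-linear map between finite sets of the same cardinality $|S|^{n^2}$, and is therefore bijective. This completes the argument, paralleling \cite[Proposition 2.6.7]{JdR16}.
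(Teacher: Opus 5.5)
Your proof is correct. The paper gives no argument of its own here. It only says the lemma is analogous to \cite[Proposition 2.6.7]{JdR16}, i.e.\ to the field case $L/K$. There, injectivity of $\sum_\sigma x_\sigma u_\sigma\mapsto\sum_\sigma l_{x_\sigma}\circ\sigma$ is immediate (Dedekind's lemma over the field $L$, or simplicity of the cyclic algebra), and surjectivity is a $K$-dimension count.

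You follow the same skeleton, but you supply the step that does not transfer verbatim. $R$ is not a field, and for $r>1$ the ring $(R/S,\sigma,1)\cong M_n(S)$ is not simple (cf.\ the Remark after Theorem \ref{structure}), so neither field-case injectivity argument applies directly. You normalize the coefficients by their common $p$-adic valuation and then reduce to the residue extension $(R/pR)/(S/pS)$, where \cite[Theorem 14.32]{Wan03} and Dedekind's lemma apply; this handles the issue cleanly. Replacing the dimension count by a cardinality count is legitimate, since Galois rings are finite.

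There are two equally short alternatives. One is to observe that $\psi$ reduces modulo $p$ to the field-case isomorphism, so $\psi$ is surjective by Nakayama (as $p$ is nilpotent) and then bijective by counting. The other is to quote the Chase--Harrison--Rosenberg characterization of Galois extensions of commutative rings, in which bijectivity of $R*\operatorname{Gal}(R/S)\to\operatorname{End}_S(R)$ is one of the equivalent defining conditions. Your version has the merit of being self-contained, using only \cite[Theorems 14.8, 14.32]{Wan03}.

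Finally, you were right to fix the convention $u_\sigma y=\sigma(y)u_\sigma$. It is the convention under which the stated formula is multiplicative. With the convention $rz_g=z_g\sigma_g(r)$ of Section 2, one would have to send $xu_\sigma$ to $l_x\circ\sigma^{-1}$ instead.
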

\noindent We are now in position to state the next theorem, which provides a complete set of matrix units in each component.
\begin{Theorem}\label{MatrixUnits}
Consider a strong Shoda pair $(H,K)$ of a finite group $G$. Let $R=\Z_{p^r}H\omega_{\mathcal{C}}(H,K)$ and let $S=\mathbb{Z}_{p^r}(\xi_{l_{(H,K)}})$ be the center of $\Z_{p^r}C\omega_{\mathcal{C}}(H,K)$, where $C=\operatorname{Cen}_{G}(\omega_{\mathcal{C}}(H,K))$. If $\beta$ is a normal element of the Galois extension $R$ over $S$, whose Galois group is generated by $\sigma$ and is of order $l$ and $\{1,\gamma,\cdots,\gamma^{l-1}\}$ is a basis of $\Z_{p^r}H\omega_{\mathcal{C}}(H,K)*_{\sigma}^{\tau}C/H$ over $\Z_{p^r}H\omega_{\mathcal{C}}(H,K)$, then there exist $\alpha_{0} , \ldots, \alpha_{l-1} $ in $\Z_{p^r}H\omega_{\mathcal{C}}(H,K)$ satisfying the system
	\begin{equation}\label{eq11}
 	\begin{pmatrix}
 	\beta & \beta^{p^{l_{(H,K)}}} & \cdots & \beta^{p^{(l-1)l_{(H,K)}}}\\
 	\beta^{p^{l_{(H,K)}}} & \beta^{p^{2l_{(H,K)}}} & \cdots & \beta\\
 	\vdots & \vdots & \ddots & \vdots\\
 	\beta^{p^{(l-1)l_{(H,K)}}} & \beta & \cdots & \beta^{p^{(l-2)l_{(H,K)}}}
 	\end{pmatrix}\begin{pmatrix}
 	\alpha_0\\
 	\alpha_1\\
 	\vdots\\
 	\alpha_{l-1}
 	\end{pmatrix}=\begin{pmatrix}
 	 	\sum_{i=1}^{l}  \beta^{p^{il_{(H,K)}}}\\
 	 	\beta- \beta^{p^{l_{(H,K)}}}\\
 	 	\vdots\\
 	 	\beta- \beta^{p^{(l-1)l_{(H,K)}}}
 	 	\end{pmatrix},
 	\end{equation}
 such that $\alpha = \sum_{i=0}^{l-1} \alpha_{i}\gamma^{i}$ is invertible and 
 $$\{ w_{\mathcal{C}}(G,H,K)+\boldsymbol{\alpha}t_{u}^{-1}\gamma^{i} \alpha^{-1} \widehat{E}\, \alpha\, \gamma^{-j}t_{v}\omega_{\mathcal{C}}(H,K)~|~ 1 \leq i,j \leq l,1\leq u,v\leq m,\boldsymbol{\alpha}\in S \}$$ 
	is a complete set of elementary matrices of $\Z_{p^r}Gw_{\mathcal{C}}(G,H,K)$, where $\widehat{E}=\frac{1}{l}\sum_{i=0}^{l-1}\gamma^i$ and \linebreak $\{t_{i}~|~1\leq i\leq m\}$ is a right transversal of $C$ in $G$.
\end{Theorem}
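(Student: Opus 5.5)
The proof will follow the semisimple template of \cite{BdR07} and \cite[Proposition 2.6.7]{JdR16}. I will realise the component $\Z_{p^r}Gw_{\mathcal{C}}(G,H,K)$ concretely as $M_m(M_l(S))$ and then pull back the standard matrix units.

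The first step is to reduce to the crossed product. By \Cref{Lemma1}, the map $\alpha\mapsto(\omega_{\mathcal{C}}(H,K)t_j\alpha t_i^{-1}\omega_{\mathcal{C}}(H,K))$ identifies $\Z_{p^r}Gw_{\mathcal{C}}(G,H,K)$ with $M_m(\Z_{p^r}C\omega_{\mathcal{C}}(H,K))$. The element $t_u^{-1}x\,t_v\omega_{\mathcal{C}}(H,K)$, with $x\in\Z_{p^r}C\omega_{\mathcal{C}}(H,K)$, corresponds to $x$ placed in the $(u,v)$ block. This uses the orthogonality of the conjugates $\omega_{\mathcal{C}}(H,K)^{t}$ from Proposition \ref{prop_lifting_induced_idempotents}.

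By the preceding lemma, $\Z_{p^r}C\omega_{\mathcal{C}}(H,K)\cong(R/S,\sigma,1)$, with $\gamma$ the trivialising basis element, so $\gamma^l=\omega_{\mathcal{C}}(H,K)$. Lemma \ref{psi}, applied with the normal basis $B=\{\sigma^k(\beta)\}$ of the lemma before it, gives $(R/S,\sigma,1)\cong M_l(S)$. Under $\psi$, the element $\widehat{E}=\frac1l\sum\gamma^i$ is the averaging projection. Here $l$ divides $|G|$, so $l$ is invertible in $\Z_{p^r}$. The image of $\widehat{E}$ is a rank-one idempotent whose image is $S$-spanned by $\sum_k\sigma^k(y)$.

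The second step constructs the conjugating unit. I want $\alpha=\sum\alpha_i\gamma^i$ such that $\alpha^{-1}\widehat{E}\alpha$ is the matrix unit $E_{11}$ relative to $B$, and such that conjugation by the powers $\gamma^{i}$ moves it to $E_{ii}$. Writing $\alpha\gamma^{-j}$ acting on $\beta$ coordinatewise, the condition that $\alpha$ maps the basis vector $\beta$ to the trace-type vector $\sum_i\beta^{p^{il_{(H,K)}}}$ and the remaining basis vectors into $\ker\widehat{E}$, through the differences $\beta-\beta^{p^{kl_{(H,K)}}}$, translates exactly into the circulant system \eqref{eq11}. For this I use that $\sigma$ is the Frobenius $\zeta\mapsto\zeta^{p^{l_{(H,K)}}}$.

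The main obstacle is showing that \eqref{eq11} is solvable in $R$ and that the resulting $\alpha$ is invertible. I will reduce modulo $p$. The coefficient matrix reduces to the Moore-type matrix of the normal basis $\bar\beta$ of $R/pR$ over $S/pS$, which is nonsingular because $\bar B$ is a basis. Its determinant is therefore a unit in the local ring $R$, and Cramer's rule gives a unique solution. For invertibility of $\alpha$, the reduction $\bar\alpha$ is the element constructed in the field case, which is invertible. Since $p(R/S,\sigma,1)$ is nilpotent, $\alpha$ lifts to a unit, by the argument used in Lemma \ref{order}. I expect some bookkeeping here to check that the right-hand side of \eqref{eq11} really encodes the conditions $\alpha^{-1}\widehat E\alpha=E_{11}$ together with $\alpha$ mapping $B$ to a basis.

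The last step combines the two identifications. With $e_{ij}:=\gamma^{i}\alpha^{-1}\widehat E\alpha\gamma^{-j}$ a full set of matrix units of $M_l(S)$ and $S$ central, the elements $t_u^{-1}e_{ij}t_v\omega_{\mathcal{C}}(H,K)$ form a full set of matrix units of $M_{ml}(S)$. The elementary matrices $I+\boldsymbol{\alpha}E_{(u,i),(v,j)}$ for distinct index pairs therefore correspond to $w_{\mathcal{C}}(G,H,K)+\boldsymbol{\alpha}t_u^{-1}e_{ij}t_v\omega_{\mathcal{C}}(H,K)$, since $w_{\mathcal{C}}(G,H,K)$ is the identity of the component. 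By \cite[Theorem 4.3.9]{HO89}, these elements generate $SL_{ml}(S)$.
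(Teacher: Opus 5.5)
Your proposal is correct and follows essentially the paper's argument. Lemma~\ref{Lemma1} and the map $\psi$ of Lemma~\ref{psi} put the component in the form $M_m(M_l(S))$, and the system \eqref{eq11} says exactly that $\sum_i l_{\alpha_i}\circ\sigma^i$ sends $\beta\mapsto\sum_k\sigma^k(\beta)$ and $\sigma^k(\beta)\mapsto\beta-\sigma^k(\beta)$, which gives $\psi(\alpha^{-1}\widehat{E}\alpha)=E_{11}$; multiplying by powers of $\gamma$ and by the transversal then does the rest. The only difference is that you justify solvability of \eqref{eq11} and invertibility of $\alpha$ by reduction modulo $p$. The paper merely asserts the former and reads the latter off the explicit matrix $\psi(\alpha)$, whose determinant is $\pm l$, a unit because $l$ divides $|G|$.

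You also inherit two slips from the statement. First, for $r\geq 2$ the identity $\sigma(\beta)=\beta^{p^{l_{(H,K)}}}$ holds only for $\beta$ in the Teichm\"uller set. You and the paper use it to read the coefficient matrix as $(\sigma^{i+k}(\beta))$, so $\beta$ should be taken as the Teichm\"uller lift of a normal element of $R/pR$. Second, $t_u^{-1}x\,t_v\,\omega_{\mathcal{C}}(H,K)=t_u^{-1}x\,\bigl(\omega_{\mathcal{C}}(H,K)\,t_v\omega_{\mathcal{C}}(H,K)t_v^{-1}\bigr)t_v=0$ whenever $t_v\notin C$. The element sitting in block $(u,v)$ is therefore $t_u^{-1}x\,t_v$, not $t_u^{-1}x\,t_v\,\omega_{\mathcal{C}}(H,K)$.
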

\begin{proof}
Clearly, the existence of $\alpha_{i}$ follows from the observation that the $l\times l$ matrix
$$\begin{pmatrix}
 	\beta & \beta^{p^{l_{(H,K)}}} & \cdots & \beta^{p^{(l-1)l_{(H,K)}}}\\
 	\beta^{p^{l_{(H,K)}}} & \beta^{p^{2l_{(H,K)}}} & \cdots & \beta\\
 	\vdots & \vdots & \ddots & \vdots\\
 	\beta^{p^{(l-1)l_{(H,K)}}} & \beta & \cdots & \beta^{p^{(l-2)l_{(H,K)}}}
 	\end{pmatrix}$$ is invertible.
\para\noindent Since $\alpha_{i}'$s satisfy (\ref{eq11}), we can easily compute the matrix $[\sum_{i=0}^{l-1} l_{\alpha_{i}} \circ \sigma^{i}]_{B}$ and thus \begin{equation}\label{neweq10}
\psi( \sum_{i=0}^{l-1}\alpha_i  \gamma^i)= [\sum_{i=0}^{l-1} l_{\alpha_{i}} \circ \sigma^{i}]_{B}= \begin{pmatrix}
	1 & 1 & 1 & \cdots & 1 & 1\\
	1 & -1 & 0 & \cdots & 0 & 0\\
	1 & 0 & -1 & \cdots & 0 & 0\\
	\vdots & \vdots & \vdots & \ddots & \vdots & \vdots \\
	1 & 0 & 0 & \cdots & -1 & 0\\
	1 & 0 & 0 & \cdots & 0 & -1
\end{pmatrix},
\end{equation}
 which is an invertible matrix. Hence $\alpha= \sum_{i=0}^{l-1} \alpha_{i}\gamma^{i}$ is invertible.
 \para\noindent Further,
 \begin{equation}\label{neweq11}
 	\psi(\widehat{E}) = [\frac{1}{l}\sum_{i=0}^{l-1} \sigma^{i}]_{B}= \frac{1}{l}\begin{pmatrix}
 		1 & 1 & \cdots & 1 & 1\\
 		1 & 1 & \cdots & 1 & 1\\
 		\vdots & \vdots & \ddots & \vdots & \vdots \\
 		1 & 1 & \cdots & 1 & 1\\
 		1 & 1 & \cdots & 1 & 1\\
 	\end{pmatrix}
 \end{equation} and \begin{equation}{\label{eq3}}
  \psi(\alpha^{-1}\widehat{E}\alpha) = \begin{pmatrix}
  1 & 0 & \cdots & 0 & 0\\
  0 & 0 & \cdots & 0 & 0\\
  \vdots & \vdots & \ddots & \vdots & \vdots \\
  0 & 0 & \cdots & 0 & 0\\
  0 & 0 & \cdots & 0 & 0\\
  \end{pmatrix}.
  \end{equation}
\noindent Also, $\psi(\gamma^{i} \alpha^{-1} \widehat{E}\, \alpha\, \gamma^{-j})$ is the $l \times l$ matrix with $(i,j)$-th entry $1$. As $\mathbb{Z}_{p^r}Gw_{\mathcal{C}}(G,H,K)\cong M_{m}(\mathbb{Z}_{p^r}C\omega_{\mathcal{C}}(H,K))$ via the isomorphism given by $t_{u}^{-1}\gamma^{i} \alpha^{-1} \widehat{E}\, \alpha\, \gamma^{-j}t_{v}\mapsto (\alpha_{ij})_{m\times m}$, where \linebreak $\alpha_{ij}=\omega_{\mathcal{C}}(H,K)t_j t_{u}^{-1}\gamma^{i} \alpha^{-1} \widehat{E}\, \alpha\, \gamma^{-j}t_{v} t_{i}^{-1}\omega_{\mathcal{C}}(H,K)$ so that $\alpha_{ij}\neq 0$ only if $i=u,~j=v$ and $\alpha_{uv}=\gamma^{i} \alpha^{-1} \widehat{E}\, \alpha\, \gamma^{-j}\omega_{\mathcal{C}}(H,K)$.
\end{proof}
\subsection{Example}
Consider the group $G$ presented by $$G=\langle a,b,c~|~a^3=b^3=c^3=1,ac=ca, bc=cb, b^{-1}ab=ac^{-1}\rangle.$$ This is a strongly monomial group with a complete set of strong Shoda pairs \cite[Theorem 4]{BM14} given by $$\mathcal{S}_{G}=\{(G,G), (G,\langle a,c \rangle),(\langle a,c\rangle, \langle a\rangle), (G,\langle a^kb,c \rangle)~|~ 0\leq k \leq 2\}.$$ Consider the strong Shoda pair $(H,K)=(\langle a,c\rangle, \langle a\rangle).$ The primitive central idempotent corresponding to this strong Shoda pair in $\mathbb{F}_2G$ is $\varepsilon_{\mathcal{C}}(H,K)=(c+c^2)(1+a+a^2)$, where $\mathcal{C}=[1,2]$. The nilpotency index of $\varepsilon_{\mathcal{C}}(H,K)$ is $3$ in $\Z_{8}G$. The corresponding lifted idempotent in $\mathbb{Z}_{2^3}G$ is $\omega_{\mathcal{C}}(H,K)=(2-c-c^2)(1+a+a^2).$ It follows that $C=\operatorname{Cen}_{G}(\varepsilon_{\mathcal{C}}(H,K))=H$ and $T=\{1,b,b^2\}$ is a transversal of $C$ in $G$. Also, $\mathbb{F}_{2}Ge_{\mathcal{C}}(G,H,K)\cong M_{3}(\mathbb{F}_2H\varepsilon_{\mathcal{C}}(H,K))\cong M_{3}(\mathbb{F}_2({\zeta_3}))$, where $\zeta_3$ is a primitive root of unity. Further, $\mathbb{Z}_{2^3}G\boldsymbol{e}_{\mathcal{C}}(G,H,K)\cong M_{3}(\mathbb{Z}_{2^3}H\omega_{\mathcal{C}}(H,K))$ and  $\mathbb{Z}_{2^3}H\omega_{\mathcal{C}}(H,K))$ is the Galois ring $GR(2^3,2^{3.2})$. Moreover, $GL_{3}(GR(2^3,2^{3.2}))\cong \mathcal{U}(GR(2^3,2^{3.2}))\times SL_{3}(GR(2^3,2^{3.2}))$. Hence, the unit group of $GR(2^3,2^{3.2})$ is isomorphic to $C_3\times C_4\times C_2\times C_2$. The generators of these are $\langle cw_{\mathcal{C}}(G,H,K)\rangle \times \langle (1+2c)w_{\mathcal{C}}(G,H,K)\rangle \times \langle (1+4c)w_{\mathcal{C}}(G,H,K)\rangle \times \langle(1+2c+2c^2)w_{\mathcal{C}}(G,H,K)\rangle.$
\para\noindent Therefore, by \Cref{MatrixUnits}, the set of generators of $SL_{3}(GR(2^3,2^{3.2}))$ is \linebreak$\{w_{\mathcal{C}}(G,H,K)+\boldsymbol{\alpha} b^k\omega_{\mathcal{C}}(H,K)b^{-l}~|~\boldsymbol{\alpha}\in GR(2^3,2^{3.2}) \}$.
\providecommand{\bysame}{\leavevmode\hbox to3em{\hrulefill}\thinspace}
\providecommand{\MR}{\relax\ifhmode\unskip\space\fi MR }
\providecommand{\MRhref}[2]{%
	\href{http://www.ams.org/mathscinet-getitem?mr=#1}{#2}
}
\providecommand{\href}[2]{#2}

\end{document}